
\RequirePackage[l2tabu, orthodox]{nag}

\documentclass[12pt]{amsart}
\usepackage{xypic}


\usepackage[usenames,dvipsnames]{xcolor}
\definecolor{amethyst}{rgb}{0.6, 0.4, 0.8}

\usepackage[pagebackref=true, colorlinks, allcolors=amethyst]{hyperref}

\renewcommand*{\backref}[1]{}
\renewcommand*{\backrefalt}[4]{%
  \ifcase #1 %
    \relax
  \or
    $\uparrow$#2.%
  \else
    $\uparrow$#2.%
  \fi%
}

\usepackage{mathtools}

\usepackage{fullpage,url,amssymb,enumitem,colonequals,algorithm2e}

\usepackage{mathrsfs} 

\usepackage{tabularx} 
\usepackage{booktabs} 

\usepackage{breakurl} 

\usepackage[OT2,T1]{fontenc}
\DeclareSymbolFont{cyrletters}{OT2}{wncyr}{m}{n}
\DeclareMathSymbol{\Sha}{\mathalpha}{cyrletters}{"58}
\usepackage{tikz-cd,tikz}


\newcommand{\defi}[1]{\textsf{#1}} 


\newcommand{\F}{\mathbb{F}}

\newcommand{\PP}{\mathbb{P}}
\newcommand{\Q}{\mathbb{Q}}

\newcommand{\Z}{\mathbb{Z}}

\newcommand{\Mbar}{{\overline{M}}}


\newcommand{\calO}{\mathcal{O}}


\DeclareMathOperator{\Ann}{Ann}

\DeclareMathOperator{\Hom}{Hom}

\DeclareMathOperator{\rank}{rank}

\DeclareMathOperator{\Sym}{Sym}
\DeclareMathOperator{\tr}{tr}



\newcommand{\tors}{{\operatorname{tors}}}



\newcommand{\tensor}{\otimes} 



\newtheorem{theorem}{Theorem}[section]
\newtheorem{lemma}[theorem]{Lemma}
\newtheorem{corollary}[theorem]{Corollary}
\newtheorem{proposition}[theorem]{Proposition}

\theoremstyle{definition}
\newtheorem{definition}[theorem]{Definition}

\newtheorem{example}[theorem]{Example}

\newtheorem{myalgorithm}[theorem]{Algorithm}

\theoremstyle{remark}
\newtheorem{remark}[theorem]{Remark}

\usepackage{microtype}

\renewcommand{\angle}[1]{\hspace{-2pt}\left\langle #1 \right\rangle}
\newcommand{\AJ}{\mathrm{AJ}}
\newcommand{\GC}{\mathrm{GLC}}
\newcommand{\CC}{\mathrm{CC}}

\title{A geometric linear Chabauty comparison theorem}
\author{Sachi Hashimoto and Pim Spelier}
\thanks{SH was supported by National Science Foundation grant DGE-1840990.}
\begin{document}
\begin{abstract}
The Chabauty--Coleman method is a $p$-adic method for finding all rational points on curves of genus $g$ whose Jacobians have Mordell--Weil rank $r < g$. Recently, Edixhoven and Lido developed a geometric quadratic Chabauty method that was adapted by Spelier to cover the case of geometric linear Chabauty. We compare the geometric linear Chabauty method and the Chabauty--Coleman method and show that geometric linear Chabauty can outperform Chabauty--Coleman in certain cases. However, as Chabauty--Coleman remains more practical for general computations, we discuss how to strengthen Chabauty--Coleman to make it theoretically equivalent to geometric linear Chabauty. We apply these methods to genus $2$ and genus $3$ curves.
\end{abstract}

\maketitle

\section{Introduction}

Let $C_{\Q}/\Q$ be a smooth, proper, geometrically integral curve of genus $g \geq 2$. Faltings's theorem \cite{Faltings} states that the set of rational points $C_{\Q}(\Q)$ is finite. However, it does not provide an explicit method for computing this finite set. Let $J_{\Q}/\Q$ be the Jacobian of $C_{\Q}$, with Mordell--Weil rank $r$. Fix a prime $p>2$ of good reduction for $C_{\Q}$. The Chabauty--Coleman method is an explicit $p$-adic method for computing the set of rational points on $C_{\Q}$ when $r<g$. Letting $C$ be a model of $C_\Q$ over $\Z_{(p)}$, the method computes a finite set of $p$-adic points $C(\Z_p)_\CC$ containing the rational points $C(\Z_{(p)}) = C_{\Q}(\Q)$.

In recent years, the Chabauty--Coleman method has been extended to lift the restriction $r<g$; Balakrishnan, Besser, M\"uller, Dogra, Tuitman, and Vonk \cite{QCIntegral,QCI,QCII,QCCartan} developed the quadratic Chabauty method. Edixhoven and Lido proposed a parallel geometric quadratic Chabauty method \cite{EdixhovenLido} that uses algebro-geometric methods and works in torsors over the Jacobian instead of a certain Selmer variety.

Spelier \cite{PimThesis} adapted the geometric method in Edixhoven--Lido to the linear case of Chabauty--Coleman. They outlined a theory of geometric linear Chabauty that parallels the Chabauty--Coleman method. This geometric method works in the Jacobian itself instead of its image under the logarithm in $\Q_p^g$.

This idea of working in the Jacobian itself is not new. Previously, Flynn \cite{flynn97} also leveraged the Jacobian group law to perform Chabauty-type calculations in a similar way to our geometric method. Flynn's method relied on explicit equations and explicit group laws for $J_{\Q}$ in high-dimensional projective space; the method was used to compute the rational points in several new cases of genus $2$ and Mordell--Weil rank $1$ curves. However, some of the ideas in \cite{flynn97} do not generalize that easily to higher genus and higher rank examples. Flynn uses specific equations for the embedding of genus $2$ curves in $\PP^{15}$ and theorems on the number of zeros of $p$-adic univariate polynomials that both do not extend easily to generic higher genus Jacobians and higher Mordell--Weil ranks.

While geometric linear Chabauty sacrifices the explicit nature of Flynn's method, it can nevertheless be applied to curves of any genus.
The geometric linear Chabauty method computes a finite set of $p$-adic points $C(\Z_p)_\GC$ containing the set of rational points $C_\Q(\Q)$. The method can be performed modulo $p^n$ for any precision $n \in \Z_{>0}$, although it does not always result in an upper bound on the number of rational points. Done modulo $p$, the computations are simply linear algebra.

In this paper, we survey both the geometric linear Chabauty and Chabauty--Coleman methods, and we provide many examples of the new geometric linear Chabauty method of Spelier. Our main result is a comparison theorem between the two methods. In Theorem \ref{mainthmcomp}, we show that the geometric linear Chabauty method outperforms the Chabauty--Coleman method in certain cases. We have the inclusions: \[ C_\Q(\Q) \subseteq C(\Z_p)_\GC \subseteq C(\Z_p)_\CC.\] Furthermore we give an explicit characterization of any excess points the Chabauty--Coleman method finds, i.e. of the set $C(\Z_p)_\CC \setminus C(\Z_p)_\GC$.

However, because the geometric linear Chabauty method can be prohibitively difficult to implement, in Algorithm \ref{alg:improvedCC} we instead provide an upgrade for the Chabauty--Coleman method that makes it equivalent to geometric linear Chabauty. Finally, this paper makes a practical improvement to the geometric linear Chabauty method, replacing complicated Jacobian arithmetic over $\Z/p^2\Z$ with very low precision Coleman integration on the curve and arithmetic in $\F_p^g$.

We start by defining our notational conventions in Section \ref{set-up}.
In Section \ref{GeoChab} we introduce the geometric linear Chabauty method of Spelier.  Section \ref{S:ChabCol} reviews the Chabauty--Coleman method. We showcase the explicit linear algebra method for finding rational points on $C_{\Q}$ in Section \ref{S:ExplicitModP}. The main theorem and discussion on comparison is found in Section \ref{comparison}.

\section{Acknowledgments}
We are very grateful to Jennifer Balakrishnan for helpful comments during the preparation of this paper. We are also thankful to Bas Edixhoven for his generous advice and Steffen M\"uller for assistance with Magma computations. We thank the anonymous referee for many helpful suggestions for improving the paper.
\section{Background}
\label{background}  
\subsection{Set-up} 
\label{set-up}

Let $C_{\Q}/\Q$ be a smooth, proper, geometrically integral curve of genus $g \geq 2$.  Fix $p>2$ a prime of good reduction for $C_{\Q}$ and let $C /{\Z_{(p)}}$ be a smooth model for the curve over the local ring. Then
\begin{equation}C(\Z_{(p)}) = C_{\Q}(\Q) \end{equation}
so the problem of determining $\Q$-points on $C_\Q$ can be replaced by the problem of determining $\Z_{(p)}$-points on $C$.

Let $J/\Z_{(p)}$ be the Jacobian of $C$ and suppose that the Mordell--Weil rank $r$ of $J_\Q(\Q)$ or, equivalently, $J(\Z_{(p)})$ is less than $g$. We use $M$ to denote the $p$-adic closure of the Mordell--Weil group $\overline{J(\Z_{(p)})}$ in $J(\Z_p)$. Denote the torsion subgroup of $M$ by $M^\tors$. Let $r' \leq r$ be the rank of $M/M^\tors$ as a $\Z_p$-module; we assume we have computed $r'$ elements of $J(\Z_{(p)})$ that topologically generate $M$. We also assume $C(\Z_{(p)})$ is non-empty and fix forever a basepoint $b \in C(\Z_{(p)})$. Let $\overline{b} \in C(\F_p)$ denote the reduction of $b$ modulo $p$.

\begin{remark}
In the case that $r$ is at most $g$, one usually has $r' = r$. If $r' < r$, there is generally a geometric reason for this, for example the Jacobian splitting as a product of smaller abelian varieties up to isogeny, in which case $r'$ itself and $r'$ topological generators can often be computed if the Mordell--Weil group is known.
\end{remark}

For $X$ a scheme, $R$ a local ring with residue field $\F_p$, and $Q \in X(\F_p)$, let $X(R)_Q$ denote the residue disk $\{x \in X(R) : \bar{x} = Q\}$ over $Q$; we use the same notation for the residue disks of $M$.

We will need a description of $X(R)_Q$ the residue disk of a smooth scheme over $\Z_p$. For this, we use the following lemma from \cite{PimThesis} that can be applied to an affine chart of $X$ containing $Q$.
\begin{lemma}[{\cite[Lemma~2.2]{PimThesis}}]
\label{lem:smooth}
Let $X$ be a smooth affine scheme over $\Z_p$ of relative dimension $d$, let $Q$ be an $\F_p$-point of $X$, and let $t_1,\dots,t_d$ be parameters of $X$ at $Q$, i.e. elements of the local ring $\calO_{X,Q}$ such that the maximal ideal is given by $(p,t_1,\dots,t_d)$. Define $\widetilde{t}_i \colonequals t_i/p$.
Then evaluation of $\widetilde{t}$, the vector $(\widetilde{t}_1,\dots,\widetilde{t}_d)$, gives a bijection $\widetilde{t}\colon X(\Z_p)_Q \to (\Z_p)^d$.
\end{lemma}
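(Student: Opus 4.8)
The plan is to reduce to the case $Q$ being the origin of affine space and then use the completed local ring. First I would reformulate: by smoothness of $X$ over $\Z_p$ of relative dimension $d$, the parameters $t_1,\dots,t_d$ together with $p$ generate the maximal ideal $\mathfrak{m}_Q$ of $\calO_{X,Q}$, and the residue field is $\F_p$. A point $x \in X(\Z_p)$ reducing to $Q$ is the same as a local $\Z_p$-algebra homomorphism $\calO_{X,Q} \to \Z_p$ lifting $\calO_{X,Q} \to \F_p$; since $\Z_p$ is $p$-adically complete and separated, such a homomorphism factors uniquely through the $\mathfrak{m}_Q$-adic completion $\widehat{\calO_{X,Q}}$. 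So it suffices to understand $\Z_p$-points of $\Spec \widehat{\calO_{X,Q}}$ lying over the closed point.

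The key step is the structure of the completion. Because $X$ is smooth over $\Z_p$ at $Q$ and $t_1,\dots,t_d$ is a regular system of parameters relative to the base (i.e.\ their images form a basis of $\mathfrak{m}_Q/(\mathfrak{m}_Q^2 + p\calO_{X,Q})$, equivalently of the cotangent space of the fiber), I would invoke the standard fact that $\widehat{\calO_{X,Q}} \cong \Z_p[[t_1,\dots,t_d]]$, the formal power series ring, with the $t_i$ mapping to the indeterminates. This is where the smoothness hypothesis does all the work: one gets it from the infinitesimal lifting criterion, or equivalently from the Cohen structure theorem adapted to the relative situation (e.g.\ via \'EGA~IV or \cite{PimThesis} itself). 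Given this identification, a continuous $\Z_p$-algebra map $\Z_p[[t_1,\dots,t_d]] \to \Z_p$ sending each $t_i$ into $p\Z_p$ (forced by the condition of lifting the point $Q$ where all $t_i$ vanish) is determined by, and can be arbitrarily prescribed by, the values $t_i \mapsto p a_i$ with $a_i \in \Z_p$; convergence of any power series is automatic since the $t_i$ land in the maximal ideal. Thus $x \mapsto (t_1(x)/p,\dots,t_d(x)/p) = (a_1,\dots,a_d)$ is a well-defined bijection $X(\Z_p)_Q \to \Z_p^d$, which is exactly the claimed evaluation map of $\widetilde{t} = (\widetilde t_1,\dots,\widetilde t_d)$.

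The main obstacle is making the identification $\widehat{\calO_{X,Q}} \cong \Z_p[[t_1,\dots,t_d]]$ precise and checking that the $t_i$ genuinely map to the power series variables rather than merely generating the same ideal; one must verify that no relations are imposed, which again is exactly the flatness/smoothness over $\Z_p$ (the special fiber has regular local ring $\F_p[[\bar t_1,\dots,\bar t_d]]$ and $X$ is $\Z_p$-flat, so $\widehat{\calO_{X,Q}}$ is $\Z_p$-flat with that reduction, forcing it to be $\Z_p[[t_1,\dots,t_d]]$). A secondary point to be careful about is that $t_i(x) \in p\Z_p$ is forced for every $x \in X(\Z_p)_Q$ — this is immediate since $t_i \in \mathfrak{m}_Q$ and $x$ reduces to $Q$ — so that $\widetilde t_i(x) = t_i(x)/p$ really lies in $\Z_p$ and the map is defined on all of $X(\Z_p)_Q$. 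Everything else (injectivity, surjectivity) then falls out of the universal property of the power series ring together with $p$-adic completeness of $\Z_p$.
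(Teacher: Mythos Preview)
Your argument is correct and proves the bijection as stated. It is, however, a genuinely different route from the one the paper sketches (following \cite{PimThesis}). You pass to the $\mathfrak{m}_Q$-adic completion $\widehat{\calO_{X,Q}} \cong \Z_p[[t_1,\dots,t_d]]$ and read off the bijection from the universal property of formal power series. The paper instead works geometrically: it takes an affine chart $\widetilde{X}_Q^p$ of the blowup of $X$ at $Q$, identifies $X(\Z_p)_Q$ with $\widetilde{X}_Q^p(\Z_p)$, and shows that the $p$-adic completion of the coordinate ring of $\widetilde{X}_Q^p$ is the Tate algebra $\Z_p\langle\widetilde{t}_1,\dots,\widetilde{t}_d\rangle$ of \emph{convergent} power series; the bijection then comes from $\Hom(\Z_p\langle\widetilde{t}\rangle,\Z_p) = \Z_p^d$.

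Your approach is shorter and more self-contained for the bare bijection. What the blowup approach buys is extra structure the paper relies on afterwards: the residue disk is realized as the $\Z_p$-points of an honest affine scheme with coordinate ring (after $p$-adic completion) $\Z_p\langle\widetilde{t}\rangle$, so morphisms between residue disks induced by morphisms of schemes are automatically given by \emph{convergent} power series in the $\widetilde{t}_i$. This is exactly what is used for the maps $\kappa$, $f$, and $\AJ_b$ in diagram~(\ref{diag:GC}) and in Proposition~\ref{kappa}. Your formal-completion argument does not directly yield this convergence statement; one would need a separate check that the relevant maps land in $\Z_p\langle\widetilde{t}\rangle \subset \Z_p[[\widetilde{t}]]$.
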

In fact, this is shown in a geometric fashion by giving a bijection between $X(\Z_p)_Q$ and $\widetilde{X}_Q^p(\Z_p)$, an open affine subscheme of the blowup of $X$ at $Q$. Then the coordinate ring of $\widetilde{X}_Q^p(\Z_p)$ has $p$-adic completion equal to the ring of convergent power series 
\[\Z_p\angle{\widetilde{t}_1,\dots,\widetilde{t}_d} = \{f \in \Z_p[[\widetilde{t}_1,\dots,\widetilde{t}_d]] : \text{ for all } n \geq 0, f \in \Z_p[\widetilde{t}_1,\dots,\widetilde{t}_d] + (p^n) \}.\]
 Evaluating the $\widetilde{t_i}$ yields a bijection $\widetilde{X}_Q^p(\Z_p) \to \Z_p^d$ by the formula \[\widetilde{X}_Q^p(\Z_p) =  \Hom( \calO_{\widetilde{X}_Q^p}, \Z_p) = \Hom(\Z_p\angle{\widetilde{t}_1,\dots,\widetilde{t}_d} , \Z_p) = \mathbb{A}_{\Z_p}^d.\] 

\begin{remark}
Lemma \ref{lem:smooth} works equally well modulo $p^n$, giving a bijection \[X(\Z/p^n\Z)_Q \simeq (\Z/p^{n-1}\Z)^d.\]
\end{remark}

\subsection{The geometric linear Chabauty method}
\label{GeoChab}

We recall an idea of Chabauty proving the finiteness of rational points on certain curves of genus $g\geq 2$.
\begin{theorem}[\cite{ChabautyBetter}]
\label{Chabauty}
Let $\AJ_b\colon C(\Z_p) \to J(\Z_p)$ denote the Abel--Jacobi map induced by the basepoint $b$. Then $\AJ_b (C(\Z_p)) \cap M$ is finite and therefore $C(\Z_{(p)})$ is.
\end{theorem}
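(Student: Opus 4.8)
The plan is to run Chabauty's original dimension count inside the $p$-adic Lie group $J(\Z_p)$: the image $\AJ_b(C(\Z_p))$ is a $1$-dimensional $p$-adic analytic set, $M$ has analytic dimension at most $r$, and when $r<g$ these cannot meet in an infinite set because one can produce a nonzero regular differential on $C$ whose $p$-adic integral vanishes on $M$.

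First I would introduce the $p$-adic logarithm $\log\colon J(\Z_p)\to\Lie(J_{\Q_p})\cong\Q_p^{g}$, a continuous homomorphism that is an analytic isomorphism near the identity and whose kernel is exactly the torsion of $J(\Z_p)$. Since $J(\Z_{(p)})$ is finitely generated of rank $r$, its closure $M$ is compact and $\log(M)\cong M/M^\tors$ is a finitely generated $\Z_p$-submodule of $\Q_p^{g}$ spanning a subspace $W$ with $\dim W=r'\le r<g$. The annihilator of $W$ in $\Lie(J_{\Q_p})^{\vee}\cong\HH^0(J_{\Q_p},\Omega^1)$ is therefore nonzero; pulling back along $\AJ_b$, which induces an isomorphism on global regular $1$-forms since $g\ge 1$, I get a nonzero $\omega\in\HH^0(C_{\Q_p},\Omega^1)$. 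Write $\lambda\colon\Q_p^g\to\Q_p$ for the corresponding linear functional, so $\lambda(\log(M))=0$. Because $p$ is a prime of good reduction, the regular differentials on the model $C/\Z_{(p)}$ form a rank-$g$ lattice in $\HH^0(C_{\Q_p},\Omega^1)$ (essentially the blowup-coordinate picture behind Lemma~\ref{lem:smooth}), so after rescaling I may assume $\omega$ is $p$-integral with nonzero reduction $\bar\omega$ on $C_{\F_p}$.

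Next, fix a residue disk $C(\Z_p)_Q$ and a parameter $\widetilde t$ identifying it with $\Z_p$ as in Lemma~\ref{lem:smooth}. On this disk $\log\circ\AJ_b$ is given in coordinates by convergent power series in $\widetilde t$ over $\Z_p$ whose derivatives are the $\widetilde t$-expansions of a pulled-back basis of invariant differentials; composing with $\lambda$ yields a single $F_Q\in\Z_p\langle\widetilde t\rangle$ with $dF_Q=\AJ_b^{*}\omega$ on the disk, i.e.\ a $p$-adic (Coleman) primitive of $\omega$ as reviewed in Section~\ref{S:ChabCol}. The map $m\mapsto\lambda(\log m)$ is a homomorphism $M\to\Q_p$ killing a finite-index subgroup, hence identically $0$, and on $\AJ_b(C(\Z_p)_Q)\cap M$ it is computed by $F_Q\circ\widetilde t$ up to an additive constant, which is forced to vanish; so this intersection lies in the zero locus of $F_Q$. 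Since $\bar\omega\ne 0$, $F_Q$ is a nonconstant element of $\Z_p\langle\widetilde t\rangle$, so Strassmann's theorem bounds its zeros in $\Z_p$ by the (finite) order of vanishing of $\bar\omega$ at $\bar Q$. Summing over the finitely many residue disks shows $\AJ_b(C(\Z_p))\cap M$ is finite. Finally $C(\Z_{(p)})\hookrightarrow C(\Z_p)$ maps under $\AJ_b$ into $J(\Z_{(p)})\subseteq M$, and $\AJ_b$ is injective on points (a closed immersion for $g\ge 1$), so $C(\Z_{(p)})$ injects into this finite set and is finite.

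The heart of the argument — and the only place the hypothesis $r<g$ is used — is producing the nonzero annihilating differential $\omega$. After that, the key technical point is to arrange $\omega$ to be $p$-integral with nonzero reduction, so that its $p$-adic primitive on each residue disk is a genuinely nonconstant power series and Strassmann's bound applies disk by disk; the torsion and finite-index bookkeeping needed to pass from ``$\lambda$ kills $\log M$'' to ``$F_Q$ vanishes on $M$'' is routine but worth spelling out, since $\log$ is injective on $M$ only modulo torsion.
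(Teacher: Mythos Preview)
The paper does not actually prove Theorem~\ref{Chabauty}; it is stated with a citation to Chabauty's 1941 paper and then used as motivation for the geometric linear Chabauty method. So there is no proof in the paper to compare against.

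Your argument is the standard Chabauty--Coleman proof and is essentially correct. A few small points of imprecision are worth flagging. First, the claim that the additive constant ``is forced to vanish'' is not justified: you only know that constant is zero once you already have a point of $\AJ_b(C(\Z_p)_Q)\cap M$ in the disk. The clean fix is to work directly with $G_Q\colonequals\lambda\circ\log\circ\AJ_b$ restricted to the disk; this is a convergent power series in $\widetilde t$ over $\Q_p$ with bounded denominators, its derivative is $\AJ_b^*\omega$ so it is nonconstant, and it vanishes on the intersection by construction --- Strassmann then applies without any normalization of the constant term. Second, Strassmann's bound is not literally the order of vanishing of $\bar\omega$ at $\bar Q$; that sharper statement is Coleman's and needs additional care about $p$ dividing the exponents that appear when you integrate term-by-term. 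For the bare finiteness asserted here, nonconstancy suffices. Finally, the sentence ``killing a finite-index subgroup, hence identically $0$'' is redundant: you chose $\lambda$ in the annihilator of the $\Q_p$-span of $\log M$, so $\lambda(\log M)=0$ holds on the nose.
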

The geometric linear Chabauty method makes Theorem~\ref{Chabauty} explicit by computing the set $\AJ_b (C(\Z_p)) \cap M$ exactly. To start, we break up the set into a union of residue disks. Fix $Q \in C(\F_p)$ and consider the set $\AJ_{b}( C(\Z_p)_Q) \cap M_{Q-\overline{b}}$ in $J(\Z_p)$. We study the closure of the Mordell--Weil group and the image of the curve under Abel--Jacobi separately.

To describe $M_{Q-\overline{b}}$, we simply need to know whether it is empty; if not, fix a choice of $T \in J(\Z_{(p)})_{Q-\overline{b}}$; if it is, then $\AJ_{b} (C(\Z_p)_Q) \cap M_{Q-\overline{b}}$ is empty, so we can sieve out this residue disk. Indeed, this is a reformulation of the Mordell--Weil sieve at the prime $p$, as discussed in Section \ref{subs:sieve}.

Now we identify $J(\Z_p)_{Q-\overline{b}}$ with $\Z_p^g$ by Lemma \ref{lem:smooth}. Note that this identification does not preserve the additive structure. Then $\AJ_b (C(\Z_p)_Q)$ is cut out by convergent power series $f_1, \dots, f_{g-1}$ in the ring of convergent $p$-adic power series $\Z_p \langle z_1, \dots, z_g \rangle$ \cite[Remark~2.6]{PimThesis} (for the right choice of parameters, we may assume the $f_i$ are linear), and the inclusion $M_{Q-\overline{b}} \to J(\Z_p)_{Q-\overline{b}}$, identifying the former with $\Z_p^{r'}$, is given by $g$ power series $\kappa_1,\dots,\kappa_g \in \Z_p \langle x_1, \dots, x_{r'} \rangle$ \cite[Theorem~3.1]{PimThesis}. All in all, we get the diagram

\begin{equation}\label{diag:GC}\begin{tikzcd}
  && {0} \\
  && {\Z_p^{g-1}} \\
  {0} & {M_{Q-\overline{b}}} & {J(\Z_p)_{Q-\overline{b}}} \\
  && {C(\Z_p)_Q} \\
  && {0}
  \arrow["{\kappa}", from=3-2, to=3-3]
  \arrow["{f}", from=3-3, to=2-3]
  \arrow["{\AJ_b}", from=4-3, to=3-3]
  \arrow[from=3-1, to=3-2]
  \arrow[from=2-3, to=1-3]
  \arrow[from=5-3, to=4-3]
  \arrow["{\lambda_{\GC}^Q}", from=3-2, to=2-3]
\end{tikzcd}.
\end{equation}

The coordinates $\lambda_i$ for $i=1,\dots,g-1$ of $\lambda_{\GC}^Q$ consist of the pullbacks of $f_1,\dots,f_{g-1}$ along $\kappa$. They are given by composing convergent power series that are affine linear mod $p$ and thus themselves are given by convergent power series that are affine linear mod $p$. In this diagram, the vertical sequence is exact in the sense that $f^{-1}(0) = \AJ_b (C(\Z_p)_Q)$. That is the key behind the following proposition.

\begin{proposition}[{\cite[Theorem~4.1]{PimThesis}}]
\label{kappa}
Let $Q$ be an $\F_p$-point of $C$ such that there exists an element $T \in J(\Z_{(p)})_{Q-\overline{b}}$. The zero set $Z(\lambda_{\GC}^Q)$ is equal to $M_{Q-\overline{b}} \cap \AJ_b (C(\Z_p)_Q) = (M_0 + T) \cap \AJ_b (C(\Z_p)_Q)$.
\end{proposition}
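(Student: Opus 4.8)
The plan is to unwind the definitions built into diagram~\eqref{diag:GC} and then run a short diagram chase; almost all of the real content has already been absorbed into the construction of the maps $f$ and $\kappa$ and need not be reproved. First I would recall that, by construction, the coordinates $\lambda_1,\dots,\lambda_{g-1}$ of $\lambda_{\GC}^Q$ are the pullbacks of $f_1,\dots,f_{g-1}$ along $\kappa$, so that $\lambda_{\GC}^Q = f\circ\kappa$ as maps from $M_{Q-\overline{b}}\cong\Z_p^{r'}$ to $\Z_p^{g-1}$. Writing $Z(\lambda_{\GC}^Q)$ for the common zero locus $(\lambda_{\GC}^Q)^{-1}(0)$ of the $\lambda_i$, the elementary identity $Z(h\circ\kappa)=\kappa^{-1}(Z(h))$ for maps of sets gives
\[ Z(\lambda_{\GC}^Q) \;=\; \kappa^{-1}\bigl(f^{-1}(0)\bigr). \]

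Next I would invoke the ``exactness'' of the vertical column of \eqref{diag:GC}: the series $f_i$ were chosen, as in \cite[Remark~2.6]{PimThesis}, precisely so that $f^{-1}(0) = \AJ_b(C(\Z_p)_Q)$ inside the residue disk $J(\Z_p)_{Q-\overline{b}}$ --- here one notes in passing that $\AJ_b(C(\Z_p)_Q)$ does land in that disk, since $\AJ_b(x)=[x-b]$ reduces to $[Q-\overline{b}]$. After the coordinate identifications of Lemma~\ref{lem:smooth} and \cite[Theorem~3.1]{PimThesis}, the arrow $\kappa$ is just the set-theoretic inclusion $M_{Q-\overline{b}}\hookrightarrow J(\Z_p)_{Q-\overline{b}}$, so taking $\kappa^{-1}$ of a subset of $J(\Z_p)_{Q-\overline{b}}$ amounts to intersecting it with $M_{Q-\overline{b}}$. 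Combining this with the previous display yields $Z(\lambda_{\GC}^Q) = M_{Q-\overline{b}} \cap \AJ_b(C(\Z_p)_Q)$, the first asserted equality.

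For the second equality it remains to see that $M_{Q-\overline{b}} = M_0 + T$. Since $J(\Z_{(p)}) = J_\Q(\Q)$ lies inside its $p$-adic closure $M$, the hypothesis $T\in J(\Z_{(p)})_{Q-\overline{b}}$ gives $T\in M$ with reduction $Q-\overline{b}$, i.e.\ $T\in M_{Q-\overline{b}}$. As the reduction map $J(\Z_p)\to J(\F_p)$ is a group homomorphism and $M$ is a subgroup, translation by $T$ is a bijection of $M$ carrying the fibre $M_0$ over $0$ onto the fibre over $0+\overline{T}=Q-\overline{b}$, which is $M_{Q-\overline{b}}$; hence $M_{Q-\overline{b}} = M_0 + T$. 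Intersecting with $\AJ_b(C(\Z_p)_Q)$ then gives the remaining equality and finishes the proof.

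There is no serious obstacle in this argument --- the genuinely substantive inputs, namely that the image of the curve is cut out in the residue disk by convergent power series $f_i$ and that the closure of the Mordell--Weil group is described there by convergent power series $\kappa_j$, are being quoted, and everything else is formal. The one point that needs care is bookkeeping: one must check that all three sets $\AJ_b(C(\Z_p)_Q)$, $M_{Q-\overline{b}}$, and $Z(\lambda_{\GC}^Q)$ are being compared \emph{inside the same} coordinatized disk $J(\Z_p)_{Q-\overline{b}}\cong\Z_p^g$, and that the identification $M_{Q-\overline{b}}\cong\Z_p^{r'}$ is arranged so that $\kappa$ really is the inclusion; in particular one should resist using the group law through $\kappa$, since the identification $J(\Z_p)_{Q-\overline{b}}\cong\Z_p^g$ is not additive --- and it is not needed anyway.
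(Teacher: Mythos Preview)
Your proof is correct and follows exactly the approach the paper indicates: the sentence immediately preceding the proposition says that the exactness of the vertical sequence in diagram~\eqref{diag:GC}, i.e.\ $f^{-1}(0)=\AJ_b(C(\Z_p)_Q)$, ``is the key behind the following proposition,'' and your argument is precisely the short diagram chase turning that key into the stated equalities. The paper does not spell out a proof beyond this hint (it cites \cite[Theorem~4.1]{PimThesis}), so your write-up is a faithful expansion of the intended argument.
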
 
Thus $\lambda_\GC^Q$ consists of the equations we will use to compute Chabauty's finite set explicitly.
\begin{definition}
\label{def:GC}
Let \begin{equation} C(\Z_p)_\GC \colonequals  \bigcup_{\substack{Q \text{ s.t.} \\  J(\Z_{(p)})_{Q-\overline{b}}\neq \emptyset} }  Z(\lambda^Q_{\GC})\end{equation}
be the geometric linear Chabauty set.
\end{definition}

In practice, the $\lambda_i$ can only be calculated in finite $p$-adic precision, where, because they are given by convergent power series, they become polynomials. Although one can say quite a lot about the degrees of these polynomials \cite[Lemma~3.7]{PimThesis}, this method is especially fruitful modulo $p$, where the $\lambda_i$ become affine linear polynomials. To give an upper bound on $Z(\lambda_\GC^Q)$, one can use the following theorem.

\begin{proposition}[{\cite[Theorem~4.12]{EdixhovenLido}}]
\label{finiteness}

Let $A = \Z_p\angle{x_1,\dots,x_{r'}}/(\lambda_1,\dots,\lambda_{g-1})$ and $\bar{A} \colonequals \F_p[ x_1, \dots, x_{r'} ]/(\lambda_1, \dots, \lambda_{g-1})$ its reduction modulo $p$. Assume $\bar{A}$ is finite. Then $\bar{A}$ is Artinian and so $\bar{A}\simeq \prod_{m \in \mathrm{MaxSpec}(\bar{A})} \bar{A}_{m}$. 

We have the following upper bound on $|\Hom_{\Z_p}(A,\Z_p)|$ and hence on the number of points in $C(\Z_{(p)})_Q$: \[\sum_{m} \dim_{\F_p} \bar{A}_{m} \geq |\Hom_{\Z_p}(A,\Z_p)| \geq C(\Z_{(p)})_Q\] where the sum is taken over $m$ such that $\bar{A}/m \bar{A} = \F_p$.

\end{proposition}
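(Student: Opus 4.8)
The plan is to reduce the statement to the structure theory of Artinian local $\F_p$-algebras, combined with a counting argument on $\Z_p$-points. First I would establish finiteness and the Artinian property: since $\bar A$ is finite by hypothesis, it is in particular a finite-dimensional $\F_p$-algebra, hence Artinian, and therefore decomposes as the product $\bar A \simeq \prod_{m \in \mathrm{MaxSpec}(\bar A)} \bar A_m$ over its (finitely many) maximal ideals, with each $\bar A_m$ a local Artinian $\F_p$-algebra of finite $\F_p$-dimension. This is a standard consequence of the Chinese Remainder Theorem applied to the nilradical. The only maximal ideals that can contribute to $\Hom_{\Z_p}(A,\Z_p)$ are those $m$ with residue field $\bar A/m = \F_p$, since a $\Z_p$-algebra homomorphism $A \to \Z_p$ reduces mod $p$ to an $\F_p$-algebra homomorphism $\bar A \to \F_p$, which factors through exactly one such $m$.

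Next I would bound the number of $\Z_p$-points lying over a fixed $\F_p$-point. Fix $m$ with $\bar A/m = \F_p$, corresponding to a point $\bar x_0 \in \F_p^{r'}$. Any homomorphism $\varphi\colon A \to \Z_p$ reducing to this point is determined by the images $\varphi(x_i) \in \Z_p$, which are constrained to reduce to the coordinates of $\bar x_0$ mod $p$; writing $x_i = (x_0)_i + p\,y_i$ and using the completeness of $\Z_p$, such homomorphisms are governed by the completed local ring $\widehat{A_m}$. The key input is that $\dim_{\F_p}\bar A_m$ bounds the embedding dimension and length of $\widehat{A_m}$, so there are at most $\dim_{\F_p}\bar A_m$ independent parameters — in fact the number of $\Z_p$-points over $\bar x_0$ is at most the multiplicity $\dim_{\F_p}\bar A_m$, because a reduced point contributes at most one $\Z_p$-point per $\F_p$-point while a fat point of length $\ell$ contributes at most $\ell$. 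Summing over all relevant $m$ gives $\sum_m \dim_{\F_p}\bar A_m \geq |\Hom_{\Z_p}(A,\Z_p)|$. The second inequality, $|\Hom_{\Z_p}(A,\Z_p)| \geq \#C(\Z_{(p)})_Q$, follows from Proposition~\ref{kappa}: the zero set $Z(\lambda_{\GC}^Q)$ equals $M_{Q-\overline b}\cap \AJ_b(C(\Z_p)_Q)$, which contains $\AJ_b(C(\Z_{(p)})_Q)$, and $Z(\lambda_{\GC}^Q) = \Hom_{\Z_p}(A,\Z_p)$ by the identification of $M_{Q-\overline b}$ with $\Z_p^{r'}$ together with the defining equations $\lambda_1,\dots,\lambda_{g-1}$ of $A$; injectivity of $\AJ_b$ on $C(\Z_p)_Q$ gives the count.

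The main obstacle I anticipate is the precise comparison between $\dim_{\F_p}\bar A_m$ and the number of $\Z_p$-lifts: one must argue that a local Artinian $\F_p$-algebra of length $\ell$, when deformed over $\Z_p$ via the given presentation $A = \Z_p\angle{x_1,\dots,x_{r'}}/(\lambda_1,\dots,\lambda_{g-1})$, cannot acquire more than $\ell$ distinct $\Z_p$-points in its residue disk. The clean way to see this is to note that $\Hom_{\Z_p}(A,\Z_p)$ over a fixed $\bar x_0$ injects into $\Hom_{\Z_p}(\widehat{A_m},\Z_p)$, and that the number of such homomorphisms is bounded by the length of $\widehat{A_m}\otimes_{\Z_p}\Q_p$, which in turn is at most $\dim_{\F_p}\bar A_m$ by semicontinuity of fiber dimension (the generic fiber of $\Spec A \to \Spec\Z_p$ has length at most that of the special fiber at each point). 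This is exactly the content of \cite[Theorem~4.12]{EdixhovenLido}, so in the writeup I would either cite it directly or reproduce this semicontinuity argument; the remaining steps are formal.
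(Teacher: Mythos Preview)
The paper does not give its own proof of this proposition; it is stated with a direct citation to \cite[Theorem~4.12]{EdixhovenLido} and then used as a black box. Your proposal is therefore not to be compared against an in-paper argument but against the Edixhoven--Lido proof, and your outline is essentially that argument: Artinian decomposition of $\bar A$, restriction to maximal ideals with residue field $\F_p$, and the bound on $\Z_p$-points over each such $m$ via the inequality $\dim_{\Q_p}(A_m\otimes\Q_p)\le\dim_{\F_p}\bar A_m$ for a finite $\Z_p$-module.

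One point you leave implicit and should make explicit in a writeup: before localizing or completing at $m$, you need that $A$ itself is finite as a $\Z_p$-module. This follows from the hypothesis that $\bar A$ is finite together with $p$-adic completeness of the Tate algebra $\Z_p\angle{x_1,\dots,x_{r'}}$ (lift $\F_p$-generators of $\bar A$ and apply Nakayama in the complete setting). Once $A$ is finite over $\Z_p$ it is semilocal and complete, hence already splits as $\prod_m A_m$ without further completion; your $\widehat{A_m}$ is then just $A_m$. With that in hand, the chain $|\Hom_{\Z_p}(A_m,\Z_p)|\le |\Hom_{\Q_p}(A_m\otimes\Q_p,\Q_p)|\le \dim_{\Q_p}(A_m\otimes\Q_p)\le \dim_{\F_p}\bar A_m$ goes through cleanly, and your appeal to Proposition~\ref{kappa} for the second inequality is correct.
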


By Proposition \ref{finiteness}, as long as $\bar{A}$ is finite-dimensional, it suffices to compute $\lambda_i$ modulo $p$ to obtain upper bounds for $C(\Z_{(p)})$. 
The $\lambda_i$ are affine linear modulo $p$, so $\bar{A}$ can only be finite-dimensional if it is $\F_p$ or the zero ring, which happens if the linear system of equations $\{ \lambda_i \equiv 0 \bmod p \text{ for all } i\}$ has respectively one or zero solution(s). This observation enables the following reformulation of Proposition~\ref{finiteness}.

\begin{corollary}
\label{cor-linalg-jzp}
Assume $M_0/pM_0 \to J(\Z/p^2\Z)_0$ is injective with image $\Mbar_0$. If in every residue disk of $J(\Z/p^2\Z)$ there is at most one intersection between the image $\Mbar$ of $J(\Z_{(p)})$ and $\AJ_b (C(\Z/p^2\Z))$, then $|C(\Z_{(p)})| \leq |\Mbar \cap \AJ_b (C(\Z/p^2\Z))| \leq |C(\F_p)|$. 
\end{corollary}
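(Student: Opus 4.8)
The plan is to prove the two inequalities separately, working one residue disk at a time. Write $\rho$ for reduction modulo $p^2$, and for each $Q \in C(\F_p)$ with $J(\Z_{(p)})_{Q-\overline{b}} \neq \emptyset$ fix $T \in J(\Z_{(p)})_{Q-\overline{b}}$ as in Proposition~\ref{kappa}, so that $M_{Q-\overline{b}} = M_0 + T$ and $\Mbar_{Q-\overline{b}} = \Mbar_0 + \rho(T)$; disks with $J(\Z_{(p)})_{Q-\overline{b}} = \emptyset$ contribute nothing on either side, since then both $C(\Z_{(p)})_Q$ and $\Mbar_{Q-\overline{b}}$ are empty, and may be ignored. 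For the bound $|\Mbar \cap \AJ_b(C(\Z/p^2\Z))| \leq |C(\F_p)|$, I would observe that $\AJ_b(C(\Z/p^2\Z)) = \bigcup_{Q \in C(\F_p)} \AJ_b(C(\Z/p^2\Z)_Q)$ and that each $\AJ_b(C(\Z/p^2\Z)_Q)$ lies in the single residue disk $J(\Z/p^2\Z)_{Q-\overline{b}}$; since $Q \mapsto Q - \overline{b}$ is injective on $C(\F_p)$, the intersection meets at most $|C(\F_p)|$ distinct residue disks of $J(\Z/p^2\Z)$, and the first hypothesis says each of them contributes at most one point.

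For the bound $|C(\Z_{(p)})| \leq |\Mbar \cap \AJ_b(C(\Z/p^2\Z))|$, both sides decompose as sums over $Q \in C(\F_p)$ of the contributions of $C(\Z_{(p)})_Q$, respectively $\Mbar_{Q-\overline{b}} \cap \AJ_b(C(\Z/p^2\Z)_Q)$, so it is enough to compare these disk by disk. Since $g \geq 2$ the map $\AJ_b$ is injective on points, and by Proposition~\ref{kappa} it embeds $C(\Z_{(p)})_Q$ into $Z(\lambda_\GC^Q) = M_{Q-\overline{b}} \cap \AJ_b(C(\Z_p)_Q)$; hence it suffices to prove
\[ |Z(\lambda_\GC^Q)| \;\leq\; |\Mbar_{Q-\overline{b}} \cap \AJ_b(C(\Z/p^2\Z)_Q)| \;\leq\; 1 . \]

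To prove this I would reduce everything modulo $p$. Following Section~\ref{GeoChab}, under the identifications of Lemma~\ref{lem:smooth} taken modulo $p^2$ the disk $J(\Z/p^2\Z)_{Q-\overline{b}}$ becomes $\F_p^g$, the locus $\AJ_b(C(\Z/p^2\Z)_Q)$ becomes the vanishing locus of the reductions $\overline{f}_1,\dots,\overline{f}_{g-1}$, and the inclusion $\Mbar_{Q-\overline{b}} \hookrightarrow J(\Z/p^2\Z)_{Q-\overline{b}}$ becomes the affine-linear map $\overline{\kappa}\colon \F_p^{r'} \to \F_p^g$ obtained by reducing $\kappa$, whose linear part is injective precisely because $M_0/pM_0 \to J(\Z/p^2\Z)_0$ is assumed injective. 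Therefore $\Mbar_{Q-\overline{b}} \cap \AJ_b(C(\Z/p^2\Z)_Q) = \overline{\kappa}\bigl(Z(\overline{\lambda}_\GC^Q)\bigr)$, where $\overline{\lambda}_\GC^Q = \overline{f} \circ \overline{\kappa}$ is the reduction modulo $p$ of $\lambda_\GC^Q$, and, $\overline{\kappa}$ being injective, its cardinality equals the number of solutions in $\F_p^{r'}$ of the affine-linear system $\overline{\lambda}_\GC^Q \equiv 0$; the first hypothesis says this number is at most $1$, which is the second displayed inequality. To pass back to $Z(\lambda_\GC^Q)$ I would invoke Proposition~\ref{finiteness}: if $\overline{\lambda}_\GC^Q \equiv 0$ has no solution then $A/pA = \overline{A} = 0$, so $A = 0$ and $Z(\lambda_\GC^Q) = \Hom_{\Z_p}(A,\Z_p) = \emptyset$; if it has a unique solution then $\overline{A} = \F_p[x_1,\dots,x_{r'}]/(\overline{\lambda}_1,\dots,\overline{\lambda}_{g-1}) \cong \F_p$, so $|Z(\lambda_\GC^Q)| = |\Hom_{\Z_p}(A,\Z_p)| \leq \dim_{\F_p}\overline{A} = 1 = |Z(\overline{\lambda}_\GC^Q)|$. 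In either case both displayed inequalities hold, and summing over $Q$ finishes the proof.

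The routine inequalities above are not the real content; the hard part is the translation in the third paragraph, that is, checking carefully that the power series $f_i$ and $\kappa_j$ — which are only affine linear \emph{modulo $p$} — genuinely descend to honest affine-linear functions on the $\F_p$-coordinatizations of the residue disks supplied by Lemma~\ref{lem:smooth} modulo $p^2$, and that the mod $p^2$ intersection of $\Mbar$ with $\AJ_b(C(\Z/p^2\Z))$ in such a disk really is cut out by $\overline{\lambda}_\GC^Q \equiv 0$, so that the hypothesis about $J(\Z/p^2\Z)$ becomes a statement about an $\F_p$-linear system. Two further points warrant attention: the injectivity hypothesis on $M_0/pM_0$ is used essentially — without it, $\overline{\kappa}$ could collapse a positive-dimensional fibre onto a single mod $p^2$ intersection point, so a unique intersection would not force a unique solution of the linear system — and the implication ``unique solution modulo $p$'' $\Rightarrow$ ``at most one $\Z_p$-solution of $\lambda_\GC^Q = 0$'' must be routed through Proposition~\ref{finiteness} rather than through injectivity of reduction on the zero set, which can fail in general.
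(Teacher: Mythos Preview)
Your proof is correct and follows the paper's intended route exactly: the paper presents the corollary as a direct reformulation of Proposition~\ref{finiteness}, using the observation (stated just before the corollary) that the $\lambda_i$ are affine linear modulo $p$, so that the hypothesis ``at most one intersection in each residue disk of $J(\Z/p^2\Z)$'' translates into ``the system $\overline{\lambda}_\GC^Q \equiv 0$ has at most one solution,'' whence $\overline{A} \in \{0,\F_p\}$ and Proposition~\ref{finiteness} bounds $|C(\Z_{(p)})_Q|$ by $0$ or $1$. You have simply written out the details the paper leaves implicit, including the role of the injectivity hypothesis on $M_0/pM_0$ in making $\overline{\kappa}$ injective.
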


In particular, if the (not necessarily homogeneous) linear system of equations $\{ \lambda_i \equiv 0 \bmod p \text{ for all }i \}$ in Proposition \ref{finiteness} has zero or one solution, there is respectively zero or at most one point in $C(\Z_{(p)})_Q$.

\begin{definition}
\label{def:goodreduction}
We say that the Mordell--Weil group is \defi{of good reduction} (modulo $p$) if the map $M_{0}/pM_0 \to J(\Z/p^2\Z)_{0}$ is injective. Otherwise, we say that it is \defi{of bad reduction}.
\end{definition}

\begin{remark}
\label{rem-basepoint-indep}
This method is independent of the choice of $b \in C(\Z_{(p)})$. Choosing a different basepoint $b'$ shifts the image of the curve by $b-b'$. Equivalently, it shifts $\Mbar$ by $b'-b$. But $b'-b$ is an element of the Mordell--Weil group, so the translation is equal to $\Mbar$.
\end{remark}

\begin{remark}
\label{rem-generators-for-M0}
In genus $g>2$, finding the generators of the Mordell--Weil group can be intractable using the current methods. Often, one can hazard a guess by giving a subgroup $G \subset M$ (for example, taking the subgroup generated by the differences of rational points on $C$ of bounded height). But verifying that the given subgroup is indeed the entire Mordell--Weil group is a very difficult task. For genus $3$ hyperelliptic curves, this can be done (see \cite{StollHeights}), but it may take weeks of CPU time. However, to execute the geometric linear Chabauty algorithm in a fixed residue disk over $Q \in C(\F_p)$, given $T \in J(\Z_{(p)})_{Q- \overline{b}}$, we do not need generators of the Mordell--Weil group; we only need $p$-adic generators of the closure of the kernel of reduction of the Mordell--Weil group $\overline{M_0}$. This is immediately satisfied if the index $[M : G]$ is not divisible by $p$, equivalently, if $G$ is saturated at $p$. The condition that $G$ is saturated at $p$ can be checked by reducing $G$ modulo $\ell$ for small primes $\ell$. Regarding the required $T \in J(\Z_{(p)})_{Q- \overline{b}}$, we can just produce such a $T$; if instead we want to prove that it does not exist, we need that the image of $G$ in $J(\F_p)$ is equal to the image of $M$; it is enough to check $G$ is saturated at (some of) the primes dividing $|J(\F_p)|$.

For applications and more details, see Examples \ref{ex-g3-MW} and \ref{ex-g3-badred}.
\end{remark}

\subsection{The modulo \texorpdfstring{$p$}{p} method}
\label{themodulopmethod}
We now describe how to translate geometric linear Chabauty modulo $p$ into $\F_p$-linear algebra. For each $Q \in C(\F_p)$, we can find $T \in J(\Z_{(p)})_{Q- \overline{b}}$, or there is no rational point in the residue disk $C(\Z_p)_Q$ (these two options are not mutually exclusive); this is explained in greater detail in Section \ref{subs:sieve}. Fix a choice of $T$. We want to calculate $\AJ_b (C(\Z_p)_Q) \cap M_{Q-\overline{b}}$ by finding the affine linear polynomials $\lambda_i \mod p$ of Proposition~\ref{kappa}. To calculate these linear polynomials modulo $p$ it suffices to work in residue disks of $J(\Z/p^2\Z)$. In this section, we assume the Mordell--Weil group is of good reduction.

Choosing a parameter $t_Q$ for $C$ at $Q$ gives a bijection $\widetilde{t}_Q \colon C(\Z/p^2 \Z)_Q \xrightarrow{\sim} \F_p$; we write $Q_\mu$ for the point mapping to $\mu $. In the same way, by choosing parameters, we have an isomorphism $J(\Z/p^2\Z)_0 \simeq \F_p^g$ as groups. 
After translation by $-T$, we see $C(\Z/p^2\Z)_Q$ embeds as a $1$-dimensional affine subspace of $J(\Z/p^2\Z)_0$ by the map $C(\Z/p^2\Z)_Q \to J(\Z/p^2\Z)_0,$ sending $x \mapsto x-b-T$. 

Write $M_{Q-\overline{b}} = T + M_0$. Let $\Mbar$ denote the image of $M$ in $J(\Z/p^2\Z)$; then $\Mbar_0 \cong \F_p^{r'}$ and we see that $(\AJ_b (C(\Z/p^2\Z)_Q ) \cap \Mbar_{Q-\overline{b}}) - T$ is exactly $(\AJ_b (C(\Z/p^2\Z)_{Q}) - T) \cap \Mbar_0$.

Now, let $D_Q \subset J(\Z/p^2\Z)_0$ be the one-dimensional subspace 
\begin{equation}
D_Q \colonequals \{Q_\mu - Q_0 : \mu \in \F_p\},
\end{equation} 
and let $v\colonequals Q_0 - b - T$. We can rephrase Corollary \ref{cor-linalg-jzp} purely in terms of linear algebra: let $\phi$ denote the linear map $\phi: D_Q \oplus \Mbar_0 \to J(\Z/p^2\Z)_0$ arising from taking the sum of the embeddings $D_Q, \Mbar_0 \subset J(\Z/p^2\Z)_0$, then by the equations $\AJ_b (C(\Z/p^2\Z)_Q) = D_Q + Q_0 - b$ and $\Mbar_{Q-\overline{b}} = \Mbar_0 + T$ we get
\begin{equation}
\label{modpGC}
|(\AJ_b (C(\Z/p^2\Z)_Q) \cap \Mbar_{Q-\overline{b}})| = |\phi^{-1}(v)|.
\end{equation}

\begin{remark}
If we know there is a rational point $P$ in the residue disk $C(\Z_p)_Q$, then we can take $t_Q$ to be a parameter at $P$, and choose $T = P - b$ to get $v = 0$ and hence $|\phi^{-1}(0)|$ on the right side of this equation. (In general, $|\phi^{-1}(0)|$ is always an upper bound on $|\phi^{-1}(v)|$.)
\end{remark}

\subsection{The Mordell--Weil sieve}
\label{subs:sieve}
For each residue disk $C(\Z_p)_Q$, if it contains a rational point then there exists $T \in J(\Z_{(p)})_{Q-\overline{b}}$. Assuming we have generators of a subgroup of the Mordell--Weil group that has the same image in $J(\F_p)$, as described in Remark \ref{rem-generators-for-M0}), the existence of $T$ can be checked by a simple calculation in $J(\F_p)$. This calculation can be thought of as a Mordell--Weil sieve \cite{BruinStollMW,SiksekMordell} at the single prime $p$. The Mordell--Weil sieve is a more general technique that produces information about congruence conditions of rational points for subvarieties of abelian varieties.

To determine whether $T$ exists, we consider the diagram
\begin{equation}
\label{sieve}
\begin{tikzcd}
C(\Z_{(p)}) \arrow[r, hook, "\AJ_b"] \arrow[d] & J(\Z_{(p)}) \arrow[d, "\alpha"] \\
C(\F_p) \arrow[r, hook, "\beta"]         & J(\F_p)        
\end{tikzcd}.
\end{equation}
For $Q \in C(\F_p)$, if $\beta(Q)$ is not in the image of $\alpha$, then we say $Q$ \defi{fails the Mordell--Weil sieve} (at $p$). In this case, the residue disk $C(\Z_p)_Q$ cannot contain a rational point. Otherwise, $Q$ \defi{passes the Mordell--Weil sieve} (at $p$).

\subsection{The Chabauty--Coleman method}
\label{S:ChabCol}

We briefly outline the Chabauty--Coleman method for producing a finite set of $p$-adic points $C(\Z_p)_{\CC} \subset C(\Z_p)$ that contains the rational points $C(\Z_{(p)})$. For more details and other perspectives on the method, we refer the reader to \cite{Torsion,Wetherell,McCallumPoonen}.

Fix a basepoint $b \in C(\Z_{(p)})$ and consider the inclusion of the curve into the Jacobian $\AJ_{b}\colon C(\Z_p) \to J(\Z_p)$ via the Abel--Jacobi map. Coleman \cite[Theorem~2.11]{Torsion} defined a $p$-adic integral on the curve $C$. The Coleman integral on regular one-forms agrees with the logarithm on $J(\Z_p)_0$ interpreted as a $p$-adic Lie group via the equality $J(\Q_p)_0 = J(\Z_p)_0$. We recall some properties of the logarithm here.

\begin{remark}
Much of the literature on formal groups and the logarithm works with $\Q_p$-vector spaces. We will need results about $\Z_p$-modules. Most results carry over; for details on the $\Z_p$-module case we reference \cite[Section~3]{PimThesis}; for the $\Q_p$-vector space case see \cite{honda70} or \cite[III \S7]{Bourbaki}.
\end{remark}

Recall $H^0(J_{\Z_p},\Omega_{J_{\Z_p}}^1)$ is a free $\Z_p$-module of rank $g$. For any element $j \in J(\Z_p)$, we have an element 
\begin{equation}
\label{firstlogdef}
\log(j) \colonequals \frac{1}{p} \int_{0}^{j} \in \Hom_{\Z_p}(H^0(J_{\Z_p},\Omega_{J_{\Z_p}}^1),\Q_p),
\end{equation}
sending a differential $\omega$ to the logarithm $1/p\int_0^j \omega$. The resulting map $\log\colon j \mapsto 1/p \int_0^j$ is a homomorphism of abelian groups.

\begin{remark}
The value of the logarithm in (\ref{firstlogdef}) is defined to be $1/p$ the value of the usual Coleman integral. We divide by $p$ to renormalize: the value $\int_0^{j}\omega$ is always divisible by $p$ if $j \in J(\Z_p)_0$.
\end{remark}

\begin{proposition}[{\cite[Lemma~3.7]{PimThesis}}]
\label{codomainlog}
Recall that we assume $p > 2$; then the logarithm induces an isomorphism of abelian groups on the kernel of reduction $J(\Z_p)_0 \xrightarrow{\sim} H^0(J_{\Z_p},\Omega_{J_{\Z_p}}^1)^\vee$, where the dual is taken in the category of $\Z_p$-modules.

Write $m \colonequals \Ann J(\F_p)$ to denote the smallest positive integer such that $m \cdot j = 0$ for all $j \in J(\F_p)$. In particular, the integral $1/p\int_0^j \colonequals 1/p \cdot ( 1/m \cdot \int_0^{mj}) $ lands in the submodule $\Hom_{\Z_p}(H^0(J_{\Z_p},\Omega_{J_{\Z_p}}^1),(1/\Ann J(\F_p)) \cdot \Z_p)$.
\end{proposition}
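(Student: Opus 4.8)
The plan is to reduce the statement to the standard convergence theory of the logarithm of a formal group, and then to pass from the kernel of reduction to all of $J(\Z_p)$ by clearing the denominator $m = \Ann J(\F_p)$.

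First I would set up the formal-group picture. Let $\hat J$ denote the completion of $J_{\Z_p}$ along its identity section, a commutative formal group of dimension $g$ over $\Z_p$; smoothness identifies the kernel of reduction $J(\Z_p)_0$ with the group $\hat J(p\Z_p)$ of $p\Z_p$-valued points and identifies $H^0(J_{\Z_p},\Omega^1_{J_{\Z_p}})$ with the $\Z_p$-module of translation-invariant $1$-forms, equivalently the cotangent space at the identity, so that $H^0(J_{\Z_p},\Omega^1_{J_{\Z_p}})^\vee=\Lie J$ is free of rank $g$. After fixing a $\Z_p$-basis $\omega_1,\dots,\omega_g$ of $H^0(\Omega^1)$ and formal coordinates $x_1,\dots,x_g$ on $\hat J$ with $dx_i=\omega_i$ at the identity, the formal logarithm $\ell=(\ell_1,\dots,\ell_g)$ is the unique homomorphism $\hat J\to\hat{\G}_a^{\,g}$ with $d\ell_i=\omega_i$, and the agreement of the Coleman integral with the $p$-adic Lie group logarithm on $J(\Z_p)_0$ (recalled above) gives $\ell_i(j)=\int_0^j\omega_i$ for $j\in J(\Z_p)_0$. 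I would then invoke the standard coefficient estimates for $\ell$ (which give, e.g., $\ord_p(b_{i,n})\ge-\lfloor\log_p|n|\rfloor$ for $\ell_i(X)=\sum_n b_{i,n}X^n$) and for its compositional inverse $\exp=\ell^{-1}$: since the absolute ramification index is $e=1$, the condition $e<p-1$ holds exactly when $p>2$, and in that range both series converge on $(p\Z_p)^g$ and define mutually inverse group isomorphisms $\hat J(p\Z_p)\isomto(p\Z_p)^g$ onto the additive group. (Concretely $|n|-\lfloor\log_p|n|\rfloor\ge 2$ for all $|n|\ge 2$ when $p\ge 3$, so the higher-order terms of $\ell(x)$, $x\in(p\Z_p)^g$, all lie in $p^2\Z_p$, whereas this degenerates to $n-1=1$ at $p=2$, $n=2$.) In particular $\ell(j)\in(p\Z_p)^g$, which is exactly the divisibility by $p$ that makes the normalization in~(\ref{firstlogdef}) meaningful; dividing by $p$, the map $\log=\tfrac1p\ell$ becomes a group isomorphism $J(\Z_p)_0\isomto\tfrac1p(p\Z_p)^g=\Z_p^g$, and under the basis dual to $\omega_1,\dots,\omega_g$ this is precisely the asserted isomorphism $J(\Z_p)_0\isomto H^0(J_{\Z_p},\Omega^1_{J_{\Z_p}})^\vee$, since $\log(j)$ sends $\omega_i$ to $\tfrac1p\int_0^j\omega_i=\tfrac1p\ell_i(j)$.

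For the claim about an arbitrary $j\in J(\Z_p)$: reduction identifies $J(\Z_p)/J(\Z_p)_0$ with $J(\F_p)$, so $mj\in J(\Z_p)_0$ and $\log(mj)=\tfrac1p\int_0^{mj}$ is a well-defined element of $H^0(\Omega^1)^\vee=\Z_p^g$ by the previous step; hence $\log(j):=\tfrac1m\log(mj)$ lands in $\tfrac1m\Z_p^g=\Hom_{\Z_p}(H^0(\Omega^1),\tfrac1m\Z_p)$. I would then check that this is independent of the chosen multiple of $m$ — additivity of $\ell$ on $J(\Z_p)_0$ gives $\int_0^{kmj}=k\int_0^{mj}$, hence $\tfrac1{km}\int_0^{kmj}=\tfrac1m\int_0^{mj}$ — and that $\log$ remains a homomorphism on all of $J(\Z_p)$, which follows from additivity on $J(\Z_p)_0$ together with the fact that $m$ annihilates the quotient $J(\Z_p)/J(\Z_p)_0$.

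The only substantive ingredient is the formal-group input in the middle step: that $\ell$ and $\exp$ converge simultaneously on the full maximal ideal $p\Z_p$ and restrict to mutually inverse isomorphisms there, which is exactly what forces the hypothesis $p>2$ (equivalently $e<p-1$). For the $\Q_p$-coefficient version one can cite \cite[III \S7]{Bourbaki} or \cite{honda70}, and for the $\Z_p$-module refinement needed here \cite[Section~3]{PimThesis}; everything else is bookkeeping of the identifications together with the one-line denominator-clearing at the end.
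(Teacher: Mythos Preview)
The paper does not prove this proposition; it is stated as a citation of \cite[Lemma~3.7]{PimThesis} with no accompanying argument. Your proof is correct and is exactly the standard formal-group argument one expects in the cited reference: identify $J(\Z_p)_0$ with $\hat J(p\Z_p)$, invoke the convergence of the formal logarithm and exponential on $(p\Z_p)^g$ under the hypothesis $e<p-1$ (here $e=1$, whence the restriction $p>2$) to obtain mutually inverse group isomorphisms, renormalize by $1/p$ to land in $\Z_p^g \cong H^0(J_{\Z_p},\Omega^1)^\vee$, and then extend to all of $J(\Z_p)$ by multiplying into the kernel of reduction via $m=\Ann J(\F_p)$ and dividing back out. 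Your coefficient bound $\ord_p(b_{i,n})\ge -\lfloor\log_p|n|\rfloor$ follows from the fact that $d\ell_i$ has $\Z_p$-integral coefficients (so $n_j b_{i,n}\in\Z_p$ for each $j$ with $n_j>0$), and the resulting valuation estimate $|n|-\lfloor\log_p|n|\rfloor\ge 2$ for $|n|\ge 2$, $p\ge 3$ is exactly what is needed. There is nothing to compare against in the present paper beyond the references you already cite.
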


As $\Omega_{J_{\Z_p}}^1$ is locally free,
\begin{align*}\log: J_{\Z/p^n\Z}(\Z/p^n\Z)_0 &\to H^0(J_{\Z/p^n\Z},\Omega^1_{J_{\Z/p^n\Z}})^\vee \tensor \Z/p^{n-1}\Z, \\ 
j &\mapsto 1/p \int_0^j 
\end{align*} is an isomorphism given by lifting $j$ to $\Z_p$, taking $\log$, then reducing modulo $p^{n-1}$. If $|J(\F_p)|$ is invertible in $\Z_p$, this even extends to a morphism 
\begin{equation}
J_{\Z/p^n\Z}(\Z/p^n\Z) \to H^0(J_{\Z/p^n\Z},\Omega^1_{J_{\Z/p^n\Z}})^\vee \tensor \Z/p^{n-1}\Z.
\end{equation} 
Choosing a basis $(\omega_i)_{i = 1}^g$ of $H^0(J_{\Z_p},\Omega_{J_{\Z_p}}^1)$ and dualizing, we get $\log: J(\Z_p)_0 \to \Z_p^g$, reducing to $\log: J(\Z/p^n\Z)_0 \to (\Z/p^{n-1}\Z)^g.$

Fix a point $R \in J(\F_p)$.
For $j \in J(\Z_p)_R$, the logarithm $\log(j)$ has a convergent power series expansion \cite[Lemma~3.7]{PimThesis}. Let $t_1, \dots, t_g$ be local parameters of $J$ at $R$ and expand $\omega_i(t_1, \dots, t_g) = \sum_{i = 1}^g f_i(t_1, \dots, t_g) d t_i$, with $f_i \in \Z_p [[t_1, \dots, t_g]]$.

By formally integrating, $\omega_i$ has a unique local antiderivative $g_i$ on $J(\Z_p)_R$ such that $d g_i = \omega_i$ and $g_i \in \Q_p[[t_1, \dots, t_g]]$ with constant term $0$. Let $\tilde{R} \in J(\Z_p)_R$ be the point where all $t_i$ vanish. We may then evaluate the power series at $j$ using the local parameters at $R$ by 
\begin{equation}
\label{logpowerseries}
 \log(j) \colonequals (g_1(t_1(j), \dots, t_g(j))/p , \dots, g_j(t_1(j), \dots, t_g(j))/p) + \log(\tilde{R})
\end{equation}

\begin{remark}
For computational purposes, it is easier to exploit the isomorphism $\AJ_b^*:H^0(C, \Omega_{C}^1) \simeq H^0(J, \Omega_{J}^1)$. Then we may evaluate $\log (j)$ using linearity of the logarithm and expanding in a local parameter on $C_{\Z_p}$ at each point. As $C$ is one-dimensional over $\Z_{(p)}$, we only need one parameter, see \cite{NATOLectures} for example.
\end{remark}

Consider the inclusion of $M$ into $J(\Z_p)$. Let 
\begin{equation}
\label{vanishingdiff}
V \colonequals \{ v \in H^0(J_{\Z_p}, \Omega_{J_{\Z_p}}^1) : 1/p \int_{0}^m v = 0 \text{ for all } m \in M\}.
\end{equation}
Since $\rank_{\Z_p}  H^0(J_{\Z_p}, \Omega_{J_{\Z_p}}^1)  =g$   but $\rank_{\Z_p} M = r'$, we see $V$ is a rank $(g-r')$ $\Z_p$-module. Let $B$ be a basis for $V$ and let $v \colon \Hom_{\Z_p}(H^0(J_{\Z_p},\Omega_{J_{\Z_p}}^1),(1/\Ann J(\F_p)) \cdot \Z_p) \to \frac{1}{\Ann J(\F_p)} \Z_p^{g-r'}$ denote the map $\psi \mapsto (\psi(\nu))_{\nu \in B}$. By construction, the map $v$ vanishes on $\log(j)$ for $j \in M$.

Next consider the Abel--Jacobi embedding $\AJ_b\colon C(\Z_p) \to J(\Z_p)$ and the composition $\Lambda_{\CC} \colonequals v\circ \log \circ \AJ_b $. We get the following diagram:

\begin{equation}\label{eqn:CC}\begin{tikzcd}
M& J(\Z_p) & {\Hom_{\Z_p}(H^0(J_{\Z_p},\Omega_{J_{\Z_p}}^1),\frac{1}{\Ann J(\F_p)} \cdot \Z_p)} & {(\frac{1}{\Ann J(\F_p)})\cdot \Z_p^{g-r'}} \\
  & C(\Z_p) \\
  & {0}
  \arrow["{\AJ_b}", from=2-2, to=1-2]
  \arrow[from=1-1, to=1-2]
  \arrow["{\log}", from=1-2, to=1-3]
  \arrow[from=3-2, to=2-2]
  \arrow["{v}", from=1-3, to=1-4]
  \arrow["{\Lambda_{\CC}}", from=2-2, to=1-4]
\end{tikzcd}
\end{equation}

\begin{definition}
\label{def:CC}
We define the Chabauty--Coleman set to be the following:
\begin{equation}
\label{CCdef}
C(\Z_p)_{\CC} \colonequals Z(\Lambda_{\CC})  = \{ P \in C(\Z_p) : 1/p \int_0^{P-b} \nu  = 0 \text{ for all } \nu \in B\}.
\end{equation}
\end{definition}
Then $\AJ_b(C(\Z_{p})) \cap M$ is contained in $C(\Z_p)_{\CC}$.

\begin{lemma}
Fix $R = \AJ_b (Q) \in J(\F_p)$ with $Q\in C(\F_p)$. Let $\widetilde{z} : C(\Z_p)_Q \xrightarrow{\sim} \Z_p$ be given by a parameter $z$ at $Q$, and denote by $Q_{\mu}$ the element with image $\mu$. Then $\log(Q_{\mu}-b)$ can be expressed as $f+ c$ where $f \in \Z_p\angle{\mu}^g$ and $c \in 1/\Ann(\F_p) \cdot  \Z_p^g$.
\end{lemma}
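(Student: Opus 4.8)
The plan is to split the logarithm into a $\mu$-independent constant and a ``tiny'' integral over the residue disk $C(\Z_p)_Q$, bound each piece, and recombine. Fix the basis $(\omega_i)_{i=1}^g$ of $H^0(J_{\Z_p},\Omega^1_{J_{\Z_p}})$ used to normalise $\log$, and identify each $\omega_i$ with its Abel--Jacobi pullback $\AJ_b^*\omega_i\in H^0(C_{\Z_p},\Omega^1_{C_{\Z_p}})$. Since $\AJ_b(Q_\mu)$ and $\AJ_b(Q_0)$ both reduce to $R$, the difference $Q_\mu-Q_0$ lies in the kernel of reduction $J(\Z_p)_0$, so using that $\log$ is a group homomorphism I would write
\[
\log(Q_\mu-b)\;=\;\log(Q_0-b)\;+\;\log(Q_\mu-Q_0).
\]
Setting $c\colonequals\log(Q_0-b)$ handles the first term: it does not depend on $\mu$, and since $Q_0-b\in J(\Z_p)_R$, Proposition~\ref{codomainlog} places it in $\Hom_{\Z_p}(H^0(J_{\Z_p},\Omega^1_{J_{\Z_p}}),\tfrac1{\Ann J(\F_p)}\Z_p)$, i.e. $c\in\tfrac1{\Ann J(\F_p)}\Z_p^g$ in the chosen coordinates.

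It then remains to show $f(\mu)\colonequals\log(Q_\mu-Q_0)$ has all coordinates in $\Z_p\angle{\mu}$. As $Q_\mu-Q_0\in J(\Z_p)_0$, the evaluation of $\log$ on the kernel of reduction in a single parameter on the curve (the remark following \eqref{logpowerseries}) identifies the $i$-th coordinate of $f(\mu)$ with the tiny integral $\tfrac1p\int_{Q_0}^{Q_\mu}\AJ_b^*\omega_i$ taken inside $C(\Z_p)_Q$. By Lemma~\ref{lem:smooth} this disk is $\Spec\Z_p\angle{\widetilde z}$ with $\widetilde z=z/p$; restricting $\AJ_b^*\omega_i$ there and using $dz=p\,d\widetilde z$ expresses it as $u_i(\widetilde z)\,d\widetilde z$ with $u_i\in p\,\Z_p\angle{\widetilde z}$. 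Let $U_i\in\Q_p[[\widetilde z]]$ be its formal antiderivative with zero constant term; the coefficient of $\widetilde z^{k+1}$ in $U_i$ has $p$-adic valuation at least $(k+1)-v_p(k+1)\ge 1$, so $\tfrac1p U_i\in\Z_p\angle{\widetilde z}$. Since $\widetilde z(Q_\mu)=\mu$, the tiny integral is $U_i(\mu)-U_i(0)=U_i(\mu)$, so the $i$-th coordinate of $f(\mu)$ is $\tfrac1p U_i(\mu)\in\Z_p\angle{\mu}$; hence $f\in\Z_p\angle{\mu}^g$ and the lemma follows.

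The step I expect to need the most care is the identification of $f(\mu)$ with the tiny Coleman integral on $C$ and the verification that formal antidifferentiation of the local expansion of $\AJ_b^*\omega_i$ — carried out in the blown-up coordinate $\widetilde z=z/p$ of Lemma~\ref{lem:smooth} — really lands in $\Z_p\angle{\widetilde z}$ after dividing by $p$. This rescaling is precisely what upgrades the a priori $\Q_p$-coefficients visible in \eqref{logpowerseries} to $\Z_p$-coefficients, and the elementary estimate $(k+1)-v_p(k+1)\ge 1$, together with the standing hypothesis $p>2$ (which also enters through Proposition~\ref{codomainlog}), is what makes it precise. The remaining bookkeeping (naturality of Coleman integration under $\AJ_b$, translation-invariance of the $\omega_i$ on $J$, and convergence of the resulting series) is routine.
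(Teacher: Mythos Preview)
Your proof is correct and makes the same initial split $\log(Q_\mu - b) = \log(Q_\mu - Q_0) + \log(Q_0 - b)$ as the paper, handling the constant $c = \log(Q_0 - b)$ identically via Proposition~\ref{codomainlog}. For the varying piece $f(\mu) = \log(Q_\mu - Q_0)$, however, you take a different route. The paper works on $J$: it chooses parameters $t_1,\dots,t_g$ at $0 \in J$, invokes the general fact that the map $\mu \mapsto \widetilde{t_i}(Q_\mu - Q_0)$ between residue disks is given by convergent power series, observes that $\log\colon J(\Z_p)_0 \to \Z_p^g$ is itself a tuple of convergent power series in the $\widetilde{t_i}$, and concludes by composing. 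You instead work on $C$, identifying $f_i(\mu)$ with the tiny integral $\tfrac1p\int_{Q_0}^{Q_\mu}\AJ_b^*\omega_i$ and antidifferentiating explicitly in the blown-up parameter $\widetilde z$. Your approach is more hands-on and produces an explicit formula useful for computation; the paper's is shorter and more structural, sidestepping any valuation bookkeeping. One small point to tighten: the bound $(k+1)-v_p(k+1)$ on the valuation of the $(k+1)$-st coefficient of $U_i$ uses more than $u_i \in p\,\Z_p\langle\widetilde z\rangle$; it needs that the coefficient of $\widetilde z^k$ in $u_i$ has valuation at least $k+1$, which does follow from your substitution $z = p\widetilde z$ applied to $\AJ_b^*\omega_i = h_i(z)\,dz$ with $h_i \in \Z_p[[z]]$, but is worth stating explicitly.
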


\begin{proof}
Write $\log(Q_{\mu}- b) = \log(Q_{\mu} - Q_0) + \log(Q_0 - b)$, and $t_1,\dots,t_g$ for parameters of $J$ at $0$. Then by \cite[Remark~2.3]{PimThesis} the function $\mu \mapsto \widetilde{t_i}(Q_{\mu} - Q_0)$ is given by a convergent power series in $\mu$. Also, $\log \colon J(\Z_p)_0 \to \Z_p^g$ consists of $g$ convergent power series in $\widetilde{t_1},\dots,\widetilde{t_g}$. The composition of convergent power series exists and is itself a convergent power series, so $\log(Q_\mu - Q_0) \in \Z_p\angle{\mu}^g$.

By Proposition~\ref{codomainlog}, the constant term $\log(Q_0 - b)$ lives in $1/\Ann(\F_p) \cdot  \Z_p^d$.
\end{proof}

In practice, to compute $C(\Z_p)_{\CC}$, we must truncate the power series by working modulo $p^n$ for some $n \in \Z_{>0}$. The choice of $n$ depends on the Newton polygon of the power series: $n$ must be large enough so that the truncated power series has the same number of zeros as the original power series, allowing us to Hensel lift the solutions of the truncated power series to $\Z_p$.

To compare the geometric linear Chabauty and Chabauty--Coleman methods, we describe how the Chabauty--Coleman method works in a single residue disk. We fix an $\F_p$-point $Q \in C(\F_p)$, and assume $Q$ passes the Mordell--Weil sieve, i.e. $J(\Z_{(p)})_{Q-\overline{b}}$ contains an element $T$. Since $T \in M$, we know $1/p \int_b^T v$ vanishes, so $\ker (v \circ \log) = \ker (v \circ \log \circ \tr_{-T})$. Then diagram (\ref{eqn:CC}), restricted to this residue disk, becomes
\begin{equation}\label{eqn:CCQ}\begin{tikzcd}
M_{Q-\overline{b}}& J(\Z_p)_{Q-\overline{b}} & H^0(J_{\Z_p},\Omega_{J_{\Z_p}}^1)^\vee & \Z_p^{g-r'} \\
  & C(\Z_p)_Q \\
  & {0}
  \arrow["{\AJ_b}", from=2-2, to=1-2]
  \arrow[from=1-1, to=1-2]
  \arrow["{\log \circ \tr_{-T}}", from=1-2, to=1-3]
  \arrow[from=3-2, to=2-2]
  \arrow["{v}", from=1-3, to=1-4]
  \arrow["{\lambda_{\CC}^Q}", from=2-2, to=1-4]
\end{tikzcd}
\end{equation}
where $\lambda_{\CC}^Q$ is now the composition $v \circ \log \circ \tr_{-T} \circ \AJ_b$. Note that $\log \circ \tr_{-T}$ is a bijection $J(\Z_p)_{Q-\overline{b}} \to H^0(J_{\Z_p},\Omega_{J_{\Z_p}}^1)^\vee$.

Unfortunately, the sequence \[0 \to M_{Q-\overline{b}} \xrightarrow{\log \circ \tr_{-T} \circ \kappa} H^0(J_{\Z_p},\Omega^1_{J_{\Z_p}})^{\vee} \xrightarrow{v} \Z_p^{g-r'} \to 0\] is not necessarily exact at the middle term. In fact, $\ker (v \circ \log)$ is the $p$-saturation $N_0$ of $M_0$ inside $J(\Z_p)_0$, by the following lemma.
\begin{lemma}
\label{zpmodules}
Let $A$ be a free $\Z_p$-module of rank $n$, let $B$ be a $\Z_p$-submodule of rank $m$, and let $v\colon A \to \Z_p^{n-m}$ be a full rank linear map vanishing on $B$. Then $\ker v$ is the $p$-saturation of $B$.
\end{lemma}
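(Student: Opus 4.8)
The plan is to work inside the $\Q_p$-vector space $A_{\Q_p} \colonequals A \otimes_{\Z_p} \Q_p$, in which both $A$ and $B$ sit as lattices. First I would recall that the $p$-saturation $N$ of $B$ inside $A$ — the set of $a \in A$ with $p^k a \in B$ for some $k \geq 0$ — is exactly $N = (B \otimes_{\Z_p} \Q_p) \cap A$, the intersection being taken inside $A_{\Q_p}$. Indeed, if $p^k a \in B$ then $a = p^{-k}(p^k a)$ lies in $B \otimes \Q_p$, and conversely any $a \in A$ that can be written as a $\Q_p$-linear combination of elements of $B$ lands in $B$ after multiplication by a suitable power of $p$, by clearing denominators.

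Next I would identify $B \otimes \Q_p$ with $\ker(v \otimes \Q_p)$, where $v \otimes \Q_p \colon A_{\Q_p} \to \Q_p^{n-m}$. The inclusion $B \otimes \Q_p \subseteq \ker(v \otimes \Q_p)$ is immediate since $v$ vanishes on $B$. For the reverse inclusion I would compare dimensions: $\dim_{\Q_p}(B \otimes \Q_p) = m$ because $B$ has rank $m$, while the hypothesis that $v$ has full rank says precisely that $v \otimes \Q_p$ is surjective onto $\Q_p^{n-m}$, so $\dim_{\Q_p} \ker(v \otimes \Q_p) = n - (n-m) = m$. A subspace of the same finite dimension as the ambient space must be the whole space, hence $B \otimes \Q_p = \ker(v \otimes \Q_p)$.

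Finally I would combine the two identifications by intersecting with $A$:
\[
N = (B \otimes \Q_p) \cap A = \ker(v \otimes \Q_p) \cap A = \ker v,
\]
where the last equality holds because, for $a \in A$, the element $(v \otimes \Q_p)(a)$ is just $v(a)$ regarded in $\Q_p^{n-m}$ via the inclusion $\Z_p^{n-m} \hookrightarrow \Q_p^{n-m}$, so it is zero if and only if $v(a) = 0$. This gives $\ker v = N$, as claimed.

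There is no serious obstacle here; this is elementary linear algebra over $\Z_p$. The only points requiring care are pinning down the description of the $p$-saturation as $(B \otimes \Q_p) \cap A$ and interpreting the hypothesis ``full rank'' in the correct way — namely as surjectivity of $v \otimes \Q_p$ — which is exactly what makes the dimension count $\dim_{\Q_p} \ker(v \otimes \Q_p) = m$ go through and forces $B \otimes \Q_p$ to fill up the kernel rather than just sit inside it.
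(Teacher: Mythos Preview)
Your proof is correct and follows essentially the same route as the paper's: tensor up to $\Q_p$, use the full-rank hypothesis and a dimension count to identify $B \otimes \Q_p$ with $\ker(v \otimes \Q_p)$, and then intersect with $A$ to recover the $p$-saturation. The paper's version is terser but the underlying argument is identical.
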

\begin{proof}
By linearity of $v$,  $\ker v$ contains the $p$-saturation of $B$. Comparing dimensions, we see that $(\ker v) \tensor_{\Z_p} \Q_p$ must equal $B \tensor_{\Z_p} \Q_p$. Then $\ker v$ is contained in $(B \tensor_{\Z_p} \Q_p) \cap A$, which is exactly the $p$-saturation of $B$.
\end{proof}

Applying Lemma \ref{zpmodules} to  $A= J(\Z_p)_0$ and $B = M_0$,  we see $\ker (v \circ \log \circ \tr_{-T}) = T + N_0$. That gives us the following corollary.

\begin{corollary}
\label{psat}
Let $Q$ be an $\F_p$-point of $C$ that passes the Mordell--Weil sieve, with $T \in J(\Z_{(p)})_{Q-\overline{b}}$. Then $Z(\lambda_\CC^Q)$ is exactly the intersection $(N_0 + T) \cap \AJ_b(C(\Z_p)_{Q-\overline{b}})$ pulled back along $\AJ_b$.
\end{corollary}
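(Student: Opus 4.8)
The plan is to trace the definitions through diagram~\eqref{eqn:CCQ} and invoke Lemma~\ref{zpmodules}. By construction $\lambda_\CC^Q = v \circ \log \circ \tr_{-T} \circ \AJ_b$, so a point $P \in C(\Z_p)_Q$ lies in $Z(\lambda_\CC^Q)$ if and only if $\AJ_b(P)$ lies in the kernel of $v \circ \log \circ \tr_{-T}$ restricted to the residue disk $J(\Z_p)_{Q-\overline{b}}$. Since $\log \circ \tr_{-T}$ is a bijection $J(\Z_p)_{Q-\overline{b}} \xrightarrow{\sim} H^0(J_{\Z_p},\Omega^1_{J_{\Z_p}})^\vee$, it suffices to identify the preimage under this bijection of $\ker v$.

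First I would apply Lemma~\ref{zpmodules} with $A = H^0(J_{\Z_p},\Omega^1_{J_{\Z_p}})^\vee$ (a free $\Z_p$-module of rank $g$), with $B$ the image $\log(M_0)$ of $M_0$ under the isomorphism $\log \colon J(\Z_p)_0 \xrightarrow{\sim} A$ of Proposition~\ref{codomainlog} (a submodule of rank $r'$, since $M_0$ has rank $r'$), and with $v$ the full-rank map to $\Z_p^{g-r'}$ which, by the definition of $V$ in~\eqref{vanishingdiff}, vanishes on $B$. One small point to check is that $v$ really is full rank: this follows because $V$ was chosen to be exactly the rank-$(g-r')$ annihilator of $M$, so $B$ lies in, hence spans after tensoring with $\Q_p$, the kernel of $v$, forcing $v$ to be surjective onto a rank-$(g-r')$ lattice. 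Then Lemma~\ref{zpmodules} gives $\ker v = $ the $p$-saturation of $\log(M_0)$, which under the isomorphism $\log$ corresponds to the $p$-saturation $N_0$ of $M_0$ inside $J(\Z_p)_0$.

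Next I would transport this back along $\tr_{-T}$. Since $T \in M \subseteq N$, translation by $-T$ identifies the residue disk $J(\Z_p)_{Q-\overline{b}}$ with $J(\Z_p)_0$ and, under this, $\ker(v \circ \log \circ \tr_{-T})$ becomes $T + N_0$ (the argument here is identical to the one already given in the text just before the corollary). Therefore $\AJ_b(P) \in T + N_0$, i.e. $P$ maps into $(N_0 + T) \cap \AJ_b(C(\Z_p)_{Q-\overline{b}})$; conversely any point of that intersection is hit by some $P \in C(\Z_p)_Q$ since $\AJ_b$ is injective on residue disks. This yields the claimed equality of $Z(\lambda_\CC^Q)$ with $(N_0 + T) \cap \AJ_b(C(\Z_p)_{Q-\overline{b}})$ pulled back along $\AJ_b$.

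The only genuine subtlety — and the place I would be most careful — is the verification that the hypotheses of Lemma~\ref{zpmodules} are met after passing through $\log$: namely that $\log(M_0)$ has rank exactly $r'$ (not merely $\le r'$) and that $v$ restricted to $A$ is full rank rather than degenerate. Both follow from the definitions already in place ($\log$ is injective on $J(\Z_p)_0$ so preserves rank, and $V$ is by construction the full annihilator of the rank-$r'$ module $M$, hence has corank exactly $r'$), but these are the steps where the bookkeeping of ranks and saturations must be done honestly rather than waved through.
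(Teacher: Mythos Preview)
Your proposal is correct and follows the same approach as the paper: both apply Lemma~\ref{zpmodules} to identify $\ker(v\circ\log\circ\tr_{-T})$ with $T+N_0$, then intersect with the image of $\AJ_b$. The only cosmetic difference is that you transport everything through $\log$ and apply the lemma with $A=H^0(J_{\Z_p},\Omega^1_{J_{\Z_p}})^\vee$ and $B=\log(M_0)$, whereas the paper applies it directly with $A=J(\Z_p)_0$ and $B=M_0$ (using $v\circ\log$ as the full-rank map); since $\log$ is an isomorphism these are equivalent, and your extra care about verifying the rank hypotheses is warranted but not a departure in method.
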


\section{Explicit Geometric linear Chabauty mod \texorpdfstring{$p$}{p}}
\label{S:ExplicitModP}

We outline a practical method for doing explicit geometric linear Chabauty modulo $p$ using Coleman integration. Previously, in \cite{PimThesis}, this was done by using the birationality of the map $\Sym^g C \to J$ given by subtracting a generic degree $g$ divisor, and using the Khuri-Makdisi representation of elements of the Jacobian (\cite{Makdisi04}), where elements of the Jacobian are represented as certain submodules of Riemann--Roch spaces. This approach of using the birationality of the map $\Sym^g C \to J$ is taken in \cite{EdixhovenLido} for geometric quadratic Chabauty as well. 

The advantage of using Coleman integration is that the map $J(\Z/p^2\Z)_{0} \to \F_p^g$ can be made much more explicit, making the computations simple linear algebra. In what follows, we describe this map and give examples of the method.

The logarithm is linear modulo $p$ for $p > 2$ \cite[Lemma~3.7]{PimThesis} (when $p = 2$ the logarithm is not necessarily linear modulo $p$, hence we exclude this case). We choose parameters for $J(\Z/p^2\Z)_0$ and a $\Z_p$-basis of $H^0(J_{\Z_p},\Omega_{J_{\Z_p}}^1)$. Then again by \cite[Lemma~3.7]{PimThesis} the reduction modulo $p$ of the logarithm, i.e. the map $\log\colon \F_p^g \to \F_p^g$, is an isomorphism of vector spaces over $\F_p$, allowing us to carry out the methods in Section \ref{GeoChab}. For example, the vector in $\F_p^g$ corresponding to $j \in J(\Z/p^2\Z)_{0}$ is $(1/p \int_0^j \omega_i)_{i=1}^g$. This translates linear algebra in a vector space $J(\Z/p^2\Z)_0$ of dimension $g$ where addition is difficult, to linear algebra in $\F_p^g$. This is the final step needed to perform geometric linear Chabauty modulo $p$.

\subsection{Examples}
\begin{example}
\label{ex-g2-first}
Let $C/\Z_{(5)}$ be (the smooth projective model of) the genus $2$ curve (LMFDB label \href{https://www.lmfdb.org/Genus2Curve/Q/10989/a/10989/1}{\texttt{10989.a.10989.1}}) given by \[ y^2 = x^5 + x^3 + x^2 + 1/4\] with Mordell--Weil rank 1. Then $C$ has the known rational points 
\[C(\Z_{(5)})_{\text{known}} = \{\infty = (1 : 0 : 0),P_1 = (0 : -1/2 : 1) ,P_2 = (0 : 1/2 : 1)\} .\]
The Mordell--Weil group of $J$ is isomorphic to $\Z$ and generated by $P_1 - \infty$.

Let $p = 5$. Over $\F_5$ we have the points \[C(\F_5)=  \{ (1 : 0 : 0), (0 : 2 : 1), (0 : 3 : 1) \}.\] 

At the finite non-Weierstrass residue disks corresponding to the points $\overline{P}_1=(0 : 2 : 1)$ and $\overline{P}_2=(0 : 3 : 1)$, we have the local parameter $x$ giving isomorphisms $C(\Z/5^2\Z)_{\overline{P}_i} \xrightarrow{\sim} 5 \Z/5^2\Z$. At the infinite point, the local parameter is $t= x^2/ y$. We identify $J(\Z/5^2\Z)_0$ with $\F_5^2$ by choosing the basis of differentials $\omega_0 = d x/y , \omega_1 = x d x/y$, then applying $\log\colon j \mapsto (1/5 \int_0^j \omega_0,1/5 \int_0^j \omega_1)$. 

Consider the residue disk $C(\Z_5)_{\overline{P}_1}$: our goal is to show there is only one point in this disk (and each other disk). We start by computing $\Mbar_0$. Since $P_1 - \infty$ generates the Mordell--Weil group, computing $\Mbar_0$ is equivalent to finding the smallest $n$ such that $n(P_1 - \infty) = 0$ in $J(\F_5)$. We find $n = 15$, that is, $m = 15 (P_1 - \infty)$ generates $M_0$. A simple calculation with tiny Coleman integrals shows that $\log m = (3,1) \in \F_5^2$. That automatically means that the map $M_0 \to J(\Z_5)_0$ is of good reduction, and $\Mbar_0$ is the $\F_5$-vector space generated by $(3,1)$.

By specializing $\lambda = 0$ and $\lambda= 1$ we see that $D_{\overline{P}_1}  \subset J(\Z/5^2\Z)_0$  is generated by $d = \\(5:-1/2:1) -  (0:-1/2:1)$ and $\log d = (4,0)$.

Now the matrix $A$ representing the map $\phi\colon D_{\overline{P}_1} \oplus \Mbar_0 \to J(\Z/5^2\Z)_0$ is \[\begin{pmatrix}4 & 3 \\ 0 & 1\end{pmatrix}.\] As $A$ is invertible, $|\phi^{-1}(v)| = 1$. Hence, by (\ref{modpGC}) the only rational point in the residue disk of $P_1$ is $P_1$ itself. Using the hyperelliptic involution, we see the same holds for $P_2$.

For the point $\infty$, we carry out a similar calculation. We change our basepoint to $\infty$, allowing us to again work in 
$J(\Z/5^2\Z)_0 = \F_5^2$. Then $D_{\overline{\infty}}$ is generated by $d' = (1:5:0) - \\(1:0:0)$ and $\log d' = (0,4)$. So we again conclude that the only rational point in the residue disk containing $\infty$ is $\infty$.

As we have now treated all three residue disks, we have proven \[C_\Q(\Q) = \{(1 : 0 : 0),  (0 : -1/2 : 1),  (0 : 1/2 : 1)\}.\]
\end{example}

\begin{example}
\label{ex-g2-rule-out}
Let $C/\Z_{(3)}$ be (the smooth projective model of) the genus $2$ curve (LMFDB label \href{https://www.lmfdb.org/Genus2Curve/Q/29395/a/29395/1}{\texttt{29395.a.29395.1}}) given by the equation
\[y^2 + (x^2 + x + 1)y = x^5 - x^4 + x^3\]
with Mordell--Weil rank 1. Then $C$ has known rational points \[C(\Z_{(3)})_{\text{known}} = \{\infty = (1 : 0 : 0),(0 : 0 : 1),(0 : -1 : 1)\} .\] The Mordell--Weil group is $J(\Z_{(p)}) \simeq \Z$ and is generated by $d \colonequals (0 : -1 : 1) - \infty$.

Let $p = 3$, then \[C(\F_3)=  \{ (1 : 0 : 0), (0 : 0 : 1), (0 : 2 : 1), (1 : 1 : 1), (1 : 2 : 1), (2 : 0 : 1), (2 : 2 : 1)\}.\] In this example we show how to rule out some of the residue disks that do not contain rational points using geometric linear Chabauty.

For each residue disk $C(\Z_3)_Q$ not containing a known rational point, using arithmetic in $J(\F_3)$ we are able to find $T \colonequals  m d$ such that $T  \in J(\Z_{(3)})_{Q - \overline{\infty}}$.  The order of $\overline{d}$ in $J(\F_3)$ is $29$ so $m <29$. Since all $Q \in C(\F_3)$ pass the Mordell--Weil sieve, we proceed to compute the matrix $A$ for each $Q$.

First we compute $\Mbar_0$, which does not depend on $Q$. To do this, we compute $\log (29d) =  (2,2) \in \F_3^2$.

Then, for each residue disk $C(\Z_3)_Q$ without a known rational point, we will compute the one-dimensional subspace $D_Q$. To do this, we lift $Q$ in two different ways $Q_1$ and $Q_2$ to finite precision, and then take the tiny Coleman integral $\log (Q_1 - Q_2)$.
\begin{table}[h!]
\begin{tabularx}{452pt}{lllll}
$Q$ & $m$ & $Q_1$& $Q_2$ &$\log(Q_1 - Q_2)$  \\
\midrule
$(1 : 1 : 1)$ & $20$& $( 1 + O(3^3): 4 + O(3^3):1)$& $(4+ O(3^3):1+ O(3^3):1)$ & $(0, 2)$ \\
$(1 : 2 : 1)$& $9$  & $(1+O(3^3): 20+O(3^3):1)$&$(4+O(3^3): 5 +O(3^3):1)$ & $(0, 1)$ \\
$(2 : 0 : 1)$& $16$ & $(2+O(3^3): 6+O(3^3):1)$&$( 5+O(3^3):6 +O(3^3):1)$ &$(2, 2)$\\
$(2 : 2 : 1)$& $13$ & $(2+O(3^3):14+O(3^3):1)$&$( 5+O(3^3): 17 +O(3^3):1)$ &$(1, 1)$
\end{tabularx}
\caption{Values for $D_Q$} 
\end{table}

For the $\F_3$-points, $(2 : 0 : 1)$ and $(2 : 2 : 1)$, we see modulo $3$ that $\Mbar_0$ and $D_Q$ give determinant zero matrices: 
\begin{equation*}
 A_{(2 : 0 : 1)} =  \begin{pmatrix} 2 & 2 \\ 2& 2 \end{pmatrix}, A_{(2 : 2 : 1)} =  
 \begin{pmatrix} 2 & 1 \\ 2& 1 \end{pmatrix}.
 \end{equation*} 
 We check whether for $v = Q_1 - \infty -  md$, the vector $\log v $ is in the image of $A_Q$. For $(2 : 0 : 1)$ we have $\log v = ( -3 + O(3^2), -2 + O(3^2) )$ and for $(2:2:1)$ we have $\log v = ( O(3^2), 2 + O(3^2) )$. Therefore by (\ref{modpGC}) neither residue disk can contain a $\Z_{(3)}$-point.

However, for $(1 : 1 : 1)$ and $(1 : 2 : 1)$, we see that $A_Q$ is invertible: 
\begin{equation*}
 A_{(1 : 1 : 1)} =  \begin{pmatrix} 2 & 0 \\ 2& 2 \end{pmatrix}, A_{(2 : 2 : 1)} =  \begin{pmatrix} 2 & 0 \\ 2& 1 \end{pmatrix}. 
 \end{equation*}
Hence geometric linear Chabauty modulo $3$ shows that there is at most one rational point in each of the two corresponding residue disks.

It is possible to show that there are no rational points in these residue disks, for example using a Mordell--Weil sieve at strategically chosen primes $\ell \neq 3$.
\end{example}
In the previous pair of examples, the geometric linear Chabauty method and Chabauty--Coleman method find the same set of $3$-adic points, i.e. $C(\Z_3)_\CC = C(\Z_3)_\GC$. In the following example we show this is not always the case. We will study the differences between the two methods in the final section. 
\begin{example}
\label{ex-g2-different}
Let $C/\Z_{(3)}$ be (the smooth projective model of) the genus $2$ curve (LMFDB label \href{https://www.lmfdb.org/Genus2Curve/Q/9470/a/37880/1}{\texttt{9470.a.37880.1}}) given by the equation \[y^2 + xy = x^5 + 2x^4 + 4x^3 + 4x^2 + 3x + 1\]
with Mordell--Weil rank 1. Then $C$ has the known rational points \[C(\Z_{(3)})_{\text{known}} = \{ (1 : 0 : 0), (0 : -1 : 1), (0 : 1 : 1)\}.\] But 
\[C(\F_3) = \{(1 : 0 : 0), (0 : 1 : 1), (0 : 2 : 1), (1 : 0 : 1), (1 : 2 : 1), (2 : 2 : 1)\} .\]

The Mordell--Weil group of $J$ is isomorphic to $\Z$, generated by $d \coloneqq (0 : -1 : 1) - (1:0:0)$. In $J(\F_3)$, $d$ has order $11$.
Sieving, we find the only residues $c \in C(\F_3)$ such that there exists $m \in \Z$ such that $c -\infty = m d$ are the images of rational points under the reduction map.

In their corresponding residue disks, the geometric linear Chabauty method only finds one solution, so $C(\Z_3)_\GC = C(\Z_{(3)})_{\text{known}} = C(\Z_{(3)})$.

However, the Chabauty--Coleman method finds the rational points along with the $p$-adic points 
\begin{align*}
&\{ (2 + 3 + 3^2 + 2\cdot3^3 + 2\cdot3^4 + 3^5 + 3^6 + O (3^7): 2 + 2\cdot3^2 + 3^3 + 3^4 + 2\cdot3^6+ O (3^7):1), \\
&(1 + 2\cdot3^2 + 2\cdot3^3 + 2\cdot3^4 + 3^6+ O (3^7): 2\cdot3 + 3^3 + 2\cdot3^5+ O (3^7):1 ),\\ &(1 + 2\cdot3^2 + 2\cdot3^3 + 2\cdot3^4 + 3^6+ O (3^7):2 + 2\cdot3^3 + 2\cdot3^4 + 2\cdot3^5+ O (3^7):1)\}.
\end{align*}
One of the $3$-adic points lies in a Weierstrass disk. Since it is the only point in its disk and is fixed by the hyperelliptic involution, the point is $2$-torsion, while the other two points do not readily have explanations for being in the Chabauty--Coleman set (in particular, they are not torsion in $J(\Z_3)$ and not recognizably algebraic).
\end{example}

\section{Comparison}
\label{comparison}
Throughout this section, $Q$ still denotes a point in $C(\F_p)$ and $T$ still denotes a point in $J(\Z_{(p)})_{Q-\overline{b}}$ (i.e., we assume $Q$ passes the Mordell--Weil sieve at $p$, see Section \ref{subs:sieve}).

To compare the geometric linear Chabauty and Chabauty--Coleman methods, we first recall some notation in the following commutative diagram, which is the union of the diagrams (\ref{diag:GC}) and (\ref{eqn:CCQ}):
\begin{equation}\label{eqn:comp}\begin{tikzcd}
  && {0} \\
  && {\Z_p^{g-1}} \\
  {0} & {M_{Q-\overline{b}}} & {J(\Z_p)_{Q-\overline{b}}} & {H^0(J_{\Z_p},\Omega^1_{J_{\Z_p}})^\vee} & {\Z_p^{g-r'}} \\
  && {C(\Z_p)_Q} \\
  && {0}
  \arrow["{\kappa}", from=3-2, to=3-3]
  \arrow["{f}", from=3-3, to=2-3]
  \arrow["{\AJ_b}", from=4-3, to=3-3]
  \arrow["{\log \circ \tr_{-T}}", from=3-3, to=3-4]
  \arrow[from=3-1, to=3-2]
  \arrow[from=2-3, to=1-3]
  \arrow[from=5-3, to=4-3]
  \arrow["{v}", from=3-4, to=3-5]
  \arrow["{\lambda_{\GC}^Q}", from=3-2, to=2-3]
  \arrow["{\lambda_{\CC}^Q}", from=4-3, to=3-5]
\end{tikzcd}
\end{equation}
where we recall
\begin{itemize}
  \item the maps $\kappa$ and $f$ are defined as in diagram (\ref{diag:GC});
  \item $\AJ_b$ is the Abel--Jacobi embedding at $b \in C(\Z_{(p)})$ from $C(\Z_p)_Q$ to $J(\Z_p)_{Q-\overline{b}}$;
  \item the map $v$ is given by $g - r'$ linearly independent Coleman integrals that vanish on $M$;
  \item and $\log : J(\Z_p)_0 \xrightarrow{\sim} H^0(J_{\Z_p},\Omega_{J_{\Z_p}}^1)^\vee$ is given by the (normalized) Coleman integral $\log\colon x \mapsto (\omega \mapsto 1/p \int_0^x \omega)$.
\end{itemize}

Now we can give a comparison theorem for the geometric linear Chabauty and Chabauty--Coleman methods.
\begin{theorem}
\label{mainthmcomp}
Let $C(\Z_p)_\GC$ and $C(\Z_p)_\CC$ be the finite subsets of $C(\Z_p)$ defined in Definitions \ref{def:GC} and \ref{def:CC}.
We have the inclusions \[ C(\Z_{(p)}) \subseteq C(\Z_p)_\GC \subseteq C(\Z_p)_\CC.\]Furthermore, for any point $R \in C(\Z_p)_\CC \setminus C(\Z_p)_\GC$, one of the following two conditions holds:
\begin{enumerate}
  \item \label{condition1} the point $\overline{R}$ fails the Mordell--Weil sieve at $p$, i.e. the image of $R-b$ in $J(\F_p)$ is not contained in the image of $M$ in $J(\F_p)$;
  \item \label{condition2} or for $T \in J(\Z_{(p)})_{\overline{R}-\overline{b}}$, the element $\log(R-b-T)$ is not in the $\Z_p$-submodule $\log M_0$ of $H^0(J_{\Z_p},\Omega^1_{J_{\Z_p}})^{\vee}$, only in its $p$-saturation $\log N_0$.
\end{enumerate} 
\end{theorem}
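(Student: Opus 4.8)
The plan is to argue residue disk by residue disk, using that $\{C(\Z_p)_Q\}_{Q \in C(\F_p)}$ partitions $C(\Z_p)$ and that $\AJ_b$ identifies $Z(\lambda_\GC^Q)$ and $Z(\lambda_\CC^Q)$ with subsets of the disk over $Q$. All the needed ingredients are already available: Theorem~\ref{Chabauty}, Proposition~\ref{kappa}, Corollary~\ref{psat} (which rests on Lemma~\ref{zpmodules}), and the fact from Proposition~\ref{codomainlog} that $\log$ restricts to an isomorphism of $\Z_p$-modules $J(\Z_p)_0 \xrightarrow{\sim} H^0(J_{\Z_p},\Omega^1_{J_{\Z_p}})^\vee$. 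So the argument is a synthesis of the pieces appearing in diagram~(\ref{eqn:comp}) rather than anything new.

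For $C(\Z_{(p)}) \subseteq C(\Z_p)_\GC$: if $P \in C(\Z_{(p)})$ and $Q \colonequals \overline{P}$, then $\AJ_b(P) = P - b \in J(\Z_{(p)}) \subseteq M$, so $J(\Z_{(p)})_{Q-\overline{b}} \neq \emptyset$; hence the disk over $Q$ occurs in Definition~\ref{def:GC}, and by Proposition~\ref{kappa} the set $Z(\lambda_\GC^Q) = M_{Q-\overline{b}} \cap \AJ_b(C(\Z_p)_Q)$ contains $\AJ_b(P)$, giving $P \in C(\Z_p)_\GC$. For $C(\Z_p)_\GC \subseteq C(\Z_p)_\CC$: fix $Q$; if $J(\Z_{(p)})_{Q-\overline{b}} = \emptyset$ then $C(\Z_p)_\GC$ meets that disk in nothing, and otherwise pick $T \in J(\Z_{(p)})_{Q-\overline{b}}$, so (as recorded just before diagram~(\ref{eqn:CCQ})) $C(\Z_p)_\CC \cap C(\Z_p)_Q = Z(\lambda_\CC^Q)$, which by Corollary~\ref{psat} equals $(N_0 + T) \cap \AJ_b(C(\Z_p)_Q)$ with $N_0$ the $p$-saturation of $M_0$ in $J(\Z_p)_0$, while $Z(\lambda_\GC^Q) = (M_0 + T) \cap \AJ_b(C(\Z_p)_Q)$ by Proposition~\ref{kappa}; since $M_0 \subseteq N_0$, taking the union over $Q$ gives the inclusion.

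For the characterization of excess points, let $R \in C(\Z_p)_\CC \setminus C(\Z_p)_\GC$ and $Q \colonequals \overline{R}$. If $Q$ fails the Mordell--Weil sieve, condition~(\ref{condition1}) holds and we are done; so assume instead that $Q$ passes the sieve and fix $T \in J(\Z_{(p)})_{Q-\overline{b}}$. Since $R \in C(\Z_p)_Q$ and $T$ reduces to $Q - \overline{b}$, the class $R - b - T = \AJ_b(R) - T$ lies in $J(\Z_p)_0$, and $\AJ_b(R) \in \AJ_b(C(\Z_p)_Q)$. Now the disk over $Q$ appears in Definition~\ref{def:GC}, so $R \notin C(\Z_p)_\GC$ forces $\AJ_b(R) \notin Z(\lambda_\GC^Q)$, hence $R - b - T \notin M_0$ by Proposition~\ref{kappa}; meanwhile $R \in C(\Z_p)_\CC$ gives $R - b - T \in N_0$ by Corollary~\ref{psat}. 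Applying the isomorphism $\log$, which sends $M_0$ to $\log M_0$ and its $p$-saturation $N_0$ to the $p$-saturation $\log N_0$, we obtain $\log(R - b - T) \in \log N_0 \setminus \log M_0$, which is condition~(\ref{condition2}). (This is visibly independent of the choice of $T$, since any two choices differ by an element of $J(\Z_{(p)}) \cap J(\Z_p)_0 \subseteq M_0$.)

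I do not expect a real obstacle here, only bookkeeping: the two sets are defined asymmetrically --- $C(\Z_p)_\CC$ by a single global vanishing condition, $C(\Z_p)_\GC$ as a union over only those disks passing the Mordell--Weil sieve --- so one must keep track of which disks contribute to each set and verify that, inside a disk passing the sieve, $C(\Z_p)_\CC$ really cuts out $Z(\lambda_\CC^Q)$ exactly. That asymmetry is exactly why sieve failure, condition~(\ref{condition1}), is a genuinely separate alternative to the $p$-saturation discrepancy, condition~(\ref{condition2}).
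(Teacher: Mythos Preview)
Your proposal is correct and follows essentially the same approach as the paper's proof: both argue residue disk by residue disk, invoking Proposition~\ref{kappa} for $Z(\lambda_\GC^Q) = (M_0+T)\cap \AJ_b(C(\Z_p)_Q)$ and Corollary~\ref{psat} for $Z(\lambda_\CC^Q) = (N_0+T)\cap \AJ_b(C(\Z_p)_Q)$, then reading off the dichotomy from $M_0 \subseteq N_0$ and the isomorphism $\log$. Your write-up is slightly more explicit than the paper's in spelling out the first inclusion $C(\Z_{(p)}) \subseteq C(\Z_p)_\GC$ and in verifying that $C(\Z_p)_\CC \cap C(\Z_p)_Q = Z(\lambda_\CC^Q)$, but the substance is the same.
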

\begin{proof}
We may prove this disk-by-disk. Suppose $Q \in C(\F_p)$ fails the Mordell--Weil sieve. Then $M_{Q-\overline{b}} = Z(\lambda_{\GC}^Q) = \emptyset$, and so (\ref{condition1}) holds.

Otherwise, we can find $T \in J(\Z_{(p)})_{Q-\overline{b}}$. Then, by Proposition~\ref{kappa}, we know $Z(\lambda_\GC^Q) = (M_0 + T) \cap \AJ_b(C(\Z_p)_{Q-\overline{b}})$ and by Corollary~\ref{psat} we know $\AJ_b(Z(\lambda_\CC^Q)) = (N_0 + T) \cap \AJ_b(C(\Z_p)_{Q-\overline{b}})$. So we see that $R - b $ belongs to $\AJ_b(Z(\lambda_\CC^Q)) - \kappa(Z(\lambda_\GC^Q))$ if and only if $\log \circ \tr_{-T}(R-b) = \log(R-b-T)$ is in $\log N_0 \setminus \log M_0$. (As any two choices of $T$ differ by an element of $M_0$, this statement is choice-independent.)
\end{proof}

\begin{remark}
\label{rem-bad-red}
In the case of good reduction of the Mordell--Weil group, the obstruction (\ref{condition2}) cannot occur, as then by definition $M_0$ is its own $p$-saturation.
\end{remark}

\begin{corollary}
\label{cor-pnmidjfp}
If $p \nmid |J(\F_p)|$, then Theorem \ref{mainthmcomp} (\ref{condition2}) is equivalent to $\log(R-b)$ not lying in the submodule $\log M$ of $H^0(J_{\Z_p},\Omega_{J_{\Z_p}}^1)^{\vee}$. 
\end{corollary}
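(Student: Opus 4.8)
The plan is to use that, under the hypothesis $p\nmid|J(\F_p)|$, the logarithm of Proposition~\ref{codomainlog} is defined on all of $J(\Z_p)$ and takes values in $H^0(J_{\Z_p},\Omega^1_{J_{\Z_p}})^{\vee}=\Z_p^g$ — since then $\frac{1}{\Ann J(\F_p)}\cdot\Z_p=\Z_p$ — and to compare $\log(R-b)$ with $\log(R-b-T)$ through the additivity relation $\log(R-b)=\log(R-b-T)+\log(T)$. Throughout we are in the situation of Theorem~\ref{mainthmcomp}, so $R\in C(\Z_p)_\CC$ and $\overline{R}$ passes the Mordell--Weil sieve; in particular $T\in J(\Z_{(p)})_{\overline{R}-\overline{b}}$ exists (this is already presupposed by condition~\ref{condition2}).

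First I would show $\log M=\log M_0$. Set $m=\Ann J(\F_p)$, which is prime to $p$ by hypothesis. Since $m$ kills $J(\F_p)$, it kills the image of $M$ under reduction, so $mM\subseteq M_0$. As $\log M_0$ is a $\Z_p$-submodule of $\Z_p^g$ and $m\in\Z_p^\times$, for every $x\in M$ we get $\log x=m^{-1}\log(mx)\in m^{-1}\log M_0=\log M_0$; the inclusion $\log M_0\subseteq\log M$ is trivial. I would also record the compatibility that $\log\colon J(\Z_p)_0\xrightarrow{\sim}\Z_p^g$ is an isomorphism of $\Z_p$-modules carrying $M_0$ to $\log M_0$ and hence its $p$-saturation $N_0$ to the $p$-saturation $\log N_0$ of $\log M_0$ in $\Z_p^g$, so that the symbol $\log N_0$ in condition~\ref{condition2} means what one expects.

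Now $T\in J(\Z_{(p)})\subseteq M$, so the first step gives $\log T\in\log M=\log M_0$. Since $\log$ is additive on $J(\Z_p)$, the elements $\log(R-b)$ and $\log(R-b-T)=\log(R-b)-\log T$ differ by an element of $\log M_0$, whence $\log(R-b)\in\log M$ if and only if $\log(R-b-T)\in\log M_0$. Finally, because $R\in C(\Z_p)_\CC$ lies in a residue disk passing the sieve, Corollary~\ref{psat} forces $R-b-T\in N_0$, i.e.\ $\log(R-b-T)\in\log N_0$ unconditionally; hence condition~\ref{condition2}, which asks that $\log(R-b-T)$ lie in $\log N_0\setminus\log M_0$, is equivalent to $\log(R-b-T)\notin\log M_0$, and therefore to $\log(R-b)\notin\log M$.

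I do not expect a genuine obstacle; the one point needing care is the equality $\log M=\log M_0$, where one must check that dividing by the prime-to-$p$ integer $m$ keeps us inside $\log M_0$ rather than its $p$-saturation. This is exactly where the hypothesis $p\nmid|J(\F_p)|$ enters — without it one only obtains $\log M\subseteq\log N_0$, so the two conditions in the statement genuinely diverge — and it is also what makes the reduction of condition~\ref{condition2} to a single non-membership possible.
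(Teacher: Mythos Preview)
Your argument is correct and follows essentially the same route as the paper: use that $m=\Ann J(\F_p)$ is a $p$-adic unit to deduce $\log M=\log M_0$, then translate by $\log T\in\log M_0$ to pass between $\log(R-b-T)\notin\log M_0$ and $\log(R-b)\notin\log M$. Your write-up is in fact slightly more careful than the paper's, which leaves implicit the observation you make via Corollary~\ref{psat} that the ``$\in\log N_0$'' clause of condition~(\ref{condition2}) is automatic for $R\in C(\Z_p)_\CC$ in a disk passing the sieve.
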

\begin{proof}
Recall from Proposition \ref{codomainlog} that the isomorphism $\log\colon J(\Z_p)_0 \to H^0(J_{\Z_p},\Omega_{J_{\Z_p}}^1)^\vee$ extends to a map $\log\colon J(\Z_p) \to m^{-1} H^0(J_{\Z_p},\Omega_{J_{\Z_p}}^1)^\vee$ where $m = \Ann J(\F_p)$, by sending $x$ to $\log(mx)/m$. Under the condition $p \nmid |J(\F_p)|$, we see that $m$ is a $p$-adic unit, so the logarithm extends to a map $\log\colon J(\Z_p) \to H^0(J_{\Z_p},\Omega_{J_{\Z_p}}^1)^\vee$. Hence $\log M_0$ is equal to $\log M$, as submodules of $H^0(J_{\Z_p},\Omega_{J_{\Z_p}}^1)^\vee$. As $\log T$ is an element of $\log M$, we conclude that $\log(R-b-T)$ not lying in $\log M_0$ is equivalent to $\log(R-b)$ not lying in $\log M$.
\end{proof}

Hence the geometric method is theoretically strictly better. However, depending on the curve and the level of precision needed, the geometric linear Chabauty method can be tricky to execute; the best known method for expressing $\lambda^Q_{\GC}$ as polynomials modulo some power of $p$ uses interpolation, then one has to solve multiple power series in $r$ variables. Sometimes one can use the implicit function theorem for power series \cite[Proposition~A.4.5]{Hazewinkel} to reduce to fewer variables, but in general this can be an arduous task. Hence in practice, we advise the following adjustment of the Chabauty--Coleman method.

\begin{myalgorithm}
\label{alg:improvedCC} \hfill
\begin{enumerate}
  \item Calculate $S \colonequals C(\Z_p)_{\CC}$ using the Chabauty--Coleman method.
  \item Let $(E_i)_{i = 1}^{r'} \in J(\Z_{(p)})$ be a set of topological generators of $M_0$ and $(\omega_j)_{j=1}^{g}$ a basis of $H^0(J_{\Z_p},\Omega_{J_{\Z_p}}^1)$.
  \item Calculate $\log E_i = (1/p \int_0^{E_i} \omega_j)_{j=1}^g \in \Z_p^g$ for $i = 1,\dots,r'$.
  \item For $R \in S$, remove $R$ from $S$ if it does not pass the Mordell--Weil sieve at $p$. \label{sievestep}
  \item For $R \in S$, let $T \in J(\Z_{(p)})_{\overline{R - b}}$. If $\log(R-b-T)$ is not a $\Z_p$-linear combination of $(\log E_i)_{i=1}^{r'}$, remove $R$ from $S$. \label{otherstep}
  \item Return $S$.
\end{enumerate}
\end{myalgorithm}
From the previous discussion, we have the following theorem.
\begin{theorem}
\label{thm:improvedCC}
Algorithm~\ref{alg:improvedCC} computes $\AJ_b (C(\Z_{p}))\cap M$.
\end{theorem}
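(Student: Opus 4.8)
The plan is to identify the set $S$ returned by Algorithm~\ref{alg:improvedCC} with the geometric linear Chabauty set $C(\Z_p)_\GC$ of Definition~\ref{def:GC}, and then to note that, under the embedding $\AJ_b\colon C(\Z_p)\hookrightarrow J(\Z_p)$, this set is exactly $\AJ_b(C(\Z_p))\cap M$. For the last identification I would first observe that for $Q\in C(\F_p)$ failing the Mordell--Weil sieve at $p$ the residue disk $M_{Q-\overline b}$ is empty: reduction sends $M=\overline{J(\Z_{(p)})}$ onto the image of $J(\Z_{(p)})$ in the finite discrete group $J(\F_p)$, which is its own closure, so $M_{Q-\overline b}\neq\emptyset$ precisely when $Q$ passes the sieve. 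Combining this with Proposition~\ref{kappa}, which gives $Z(\lambda_\GC^Q)=M_{Q-\overline b}\cap\AJ_b(C(\Z_p)_Q)$ whenever $Q$ passes the sieve, the union over sieve-passing $Q$ defining $C(\Z_p)_\GC$ agrees with the union over all $Q\in C(\F_p)$, which is $\AJ_b(C(\Z_p))\cap M$. So it suffices to prove $S=C(\Z_p)_\GC$.

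The one point needing care is that step~\ref{otherstep} correctly tests membership of $\log(R-b-T)$ in $\log M_0$. Since $E_1,\dots,E_{r'}$ are $r'$ topological generators of the free $\Z_p$-module $M_0\cong\Z_p^{r'}$, their reductions span $M_0/pM_0\cong\F_p^{r'}$ and hence form a basis of it; by Nakayama's lemma the $E_i$ generate $M_0$, and a surjective endomorphism of a finitely generated $\Z_p$-module is an isomorphism, so the $E_i$ are a $\Z_p$-basis of $M_0$. As $\log\colon J(\Z_p)_0\xrightarrow{\sim}H^0(J_{\Z_p},\Omega^1_{J_{\Z_p}})^\vee$ is an isomorphism of $\Z_p$-modules (Proposition~\ref{codomainlog}), the vectors $\log E_i$ form a $\Z_p$-basis of $\log M_0$. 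Hence, for $R$ surviving step~\ref{sievestep} and any chosen $T\in J(\Z_{(p)})_{\overline{R-b}}$ (which exists exactly because $\overline R$ passes the sieve, so that $R-b-T$ lies in the kernel of reduction), the test in step~\ref{otherstep} that $\log(R-b-T)$ be a $\Z_p$-linear combination of the $\log E_i$ is equivalent to $\log(R-b-T)\in\log M_0$, and this is independent of the choice of $T$ since two such choices differ by an element of $M_0$, exactly as noted in the proof of Theorem~\ref{mainthmcomp}.

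With this in hand I would assemble the two inclusions. After the first step $S=C(\Z_p)_\CC$, and Theorem~\ref{mainthmcomp} gives $C(\Z_p)_\GC\subseteq C(\Z_p)_\CC$. If $R\in C(\Z_p)_\GC$ then $\overline R$ passes the sieve and, by Proposition~\ref{kappa}, $R-b\in M_0+T$, so $\log(R-b-T)\in\log M_0$; thus $R$ survives both step~\ref{sievestep} and step~\ref{otherstep}. Conversely, if $R\in C(\Z_p)_\CC\setminus C(\Z_p)_\GC$, Theorem~\ref{mainthmcomp} says either $\overline R$ fails the sieve, so $R$ is discarded in step~\ref{sievestep}, or $\log(R-b-T)\in\log N_0\setminus\log M_0$, so in particular $\log(R-b-T)\notin\log M_0$ and $R$ is discarded in step~\ref{otherstep}. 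Hence the returned set equals $C(\Z_p)_\GC$, which is $\AJ_b(C(\Z_p))\cap M$. There is no deep obstacle here --- the content is already in Theorem~\ref{mainthmcomp} --- and the main thing to get right is the bookkeeping of the previous paragraph: that the finitely many computed vectors $\log E_i$ genuinely span $\log M_0$, that the test is independent of the auxiliary choices of $T$ and of the generators $E_i$, and that the algorithm's output in $C(\Z_p)$ is being compared with the subset $\AJ_b(C(\Z_p))\cap M$ of $J(\Z_p)$ via the closed embedding $\AJ_b$.
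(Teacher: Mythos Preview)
Your proof is correct and follows essentially the same approach as the paper, which simply records that the claim is immediate from Theorem~\ref{mainthmcomp} and Proposition~\ref{kappa}. You have carefully unfolded what that means---including the bookkeeping that the $E_i$ form a $\Z_p$-basis of $M_0$, that the test in step~\ref{otherstep} is independent of the choice of $T$, and that $C(\Z_p)_\GC$ coincides with $\AJ_b(C(\Z_p))\cap M$---all of which the paper leaves implicit.
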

\begin{proof}
This is immediate from Theorem~\ref{mainthmcomp} and Proposition~\ref{kappa}.
\end{proof}

\begin{remark}
\label{rem-magma-impl-MW}
Note that Step \ref{sievestep} in Algorithm \ref{alg:improvedCC} executes a Mordell--Weil sieve at the single prime $p$ on top of the usual Chabauty--Coleman method. The Mordell--Weil sieve is often used in combination with Chabauty--Coleman to sieve out extra $p$-adic points that are not rational. For example, the implementation of Chabauty--Coleman in Magma for genus $2$ curves based on \cite[Section~4.4]{BruinStollMW} executes the Chabauty--Coleman method to find $ C(\Z_p)_{\CC}$ and then runs a Mordell--Weil sieve at a set of primes $\{\ell_1, \dots, \ell_n\}$ to try to determine $C(\Z_{(p)}) = C_\Q(\Q)$. This is a more extensive Mordell--Weil sieve than the one used in Algorithm \ref{alg:improvedCC}. In practice, for the genus $2$ curves in Examples \ref{ex-g2-first}, \ref{ex-g2-rule-out}, and \ref{ex-g2-different}, Magma determines $C(\Z_{(p)})$ in a fraction of a second.

The points removed in Step \ref{otherstep} pass a Mordell--Weil sieve at $p$, but are ruled out by Theorem \ref{mainthmcomp} (\ref{condition2}). They may fail a Mordell--Weil sieve at some other prime $\ell \neq p$. 
\end{remark}

In Theorem \ref{mainthmcomp} there are two theoretical obstructions to the Chabauty--Coleman method calculating $\AJ_b (C(\Z_{p})) \cap M$ exactly. We give two examples that show these both occur, and where geometric linear Chabauty outperforms Chabauty--Coleman. In light of Remark~\ref{rem-magma-impl-MW}, we turn our attention to genus $3$ curves.

The following examples were computed in Magma and Sage. The code is available at the repository \cite{PimSachigit}.
\begin{example}
\label{ex-g3-MW}
Let $C/\Z_{(5)}$ be the (smooth projective model of) the genus $3$ curve given by the equation \[y^2 = 4x^7 - 12x^6 + 16x^5 - 12x^4 + 4x^3 +4x^2 - 4x + 1 \] taken from a database of genus $3$ hyperelliptic curves over $\Q$ \cite{g3database} computed using the methods in \cite{SutherlandHyperelliptic}. According to computations with the fake $2$-Selmer group done by the method \texttt{RankBounds} in Magma, the Mordell--Weil rank of $J$ is $2$. 

Computing the Chabauty--Coleman set with $p = 5$ we find
\begin{align*}
&C(\Z_5)_\CC =  \{ \infty = (1 : 0 : 0), (0 : -1 : 1), (0 : 1 : 1), (1 : -1 : 1), (1 : 1 : 1), \\
&W \colonequals (2 + 5 + 5^2 + 2\cdot5^3 + 4\cdot5^4 + 4\cdot5^5 + 3\cdot5^6  + O(5^6): O(5^6):1)\\
& R_1 \colonequals (4 + 3\cdot5 + 2\cdot5^2 + 3\cdot5^4 + 3\cdot5^5 +  O(5^6): 4 + 2\cdot5 + 4\cdot5^3 + 5^4 + 4\cdot5^5 + O(5^6): 1)\\
& R_2 \colonequals (4 + 3\cdot5 + 2\cdot5^2 + 3\cdot5^4 + 3\cdot5^5  + O(5^6):- (4 + 2\cdot5 + 4\cdot5^3 + 5^4 + 4\cdot5^5 + O(5^6)): 1)\\
& R_3 \colonequals (3 + 5 + 2\cdot5^2 + 4\cdot5^3 + 4\cdot5^4 + 4\cdot5^5 + O(5^6):3 + 5 + 5^3 + 2\cdot5^5 +  O(5^6)+:1) \\
& R_4 \colonequals (3 + 5 + 2\cdot 5^2 + 4\cdot 5^3 + 4\cdot 5^4 + 4\cdot 5^5 +  O(5^6):-(3 + 5 + 5^3 + 2\cdot5^5 +  O(5^6)):1)
\}.
 \end{align*}
 Then $W$ is a Weierstrass point, and therefore gives rise to a $2$-torsion point in $J$, but the $R_i$ are not readily recognizable. We can verify by computing Coleman integrals that they are not torsion in $J(\Z_5)$. However, the geometric linear Chabauty method rules out the residue disks of the $R_i$.

We first compute
\begin{align*}
C(\F_5) = &\{ (1 : 0 : 0), (0 : 4 : 1), (0 : 1 : 1),(1 : 4 : 1), (1 : 1 : 1), \\
&(2 : 0 : 1),(4 : 4 : 1), (4 : 1 : 1),(3 : 3 : 1), (3 : 2 : 1)  \}.
\end{align*}

We do not have computational access to generators of the Mordell--Weil group. However, this is not needed; see Remark~\ref{rem-generators-for-M0}. Instead, we can consider the set of differences of pairs of known rational points inside the Mordell--Weil group. Computing the canonical height pairing of each of the set of differences themselves, we look for the elements with the smallest canonical height (using code from \cite{StollHeights}); let $G_1 \colonequals (0:1:1) - \infty$ and $G_2 \colonequals (1:1:1) - \infty$. We can check that $G_1$ and $G_2$ are linearly independent by computing that their logarithms are linearly independent. Let $H$ be the subgroup of the Mordell--Weil group generated by $G_1$ and $G_2$. We cannot always expect $H$ is equal to the full Mordell--Weil group $J(\Z)$, but by Remark~\ref{rem-generators-for-M0} it is enough for $H$ to be saturated at $5$ and the primes dividing $|J(\F_5)| = 340$.

To check whether $H$ is saturated at a given prime $\ell$, we compute the kernel of the map
\[
G/\ell G \to \prod_{q} J(\F_q)/\ell J(\F_q)
\]
where $q$ runs over some small primes such that $\ell \mid |J(\F_q)|$ and check if this kernel is trivial \cite[Section~12]{StollHeights} (using code from \cite{SatCheck}).  If the kernel is not trivial, then we cannot apply geometric linear Chabauty, but if we suspect $G$ is equal to $M$, then in practice, this verification step terminates almost instantaneously. 

We construct the following subgroup of $J(\F_5)$:
\[ \overline{H} \colonequals  \angle{\overline{G}_1 , \overline{G}_2}.\] This allows us to sieve at $5$ by intersecting with the image of $C(\F_5)$
\[ H' \colonequals \{ c : c \in C(\F_p) \text{ and } (c - \infty) \in \overline{H} \}= 
\{ (1 : 0 : 0), (0 : \pm 1 : 1),(1 :\pm  1 : 1) \},\]
showing that only the reductions of the $\Z_{(5)}$-points modulo $5$ do not fail the Mordell--Weil sieve.

The Chabauty--Coleman method finds points in residue disks corresponding to the $\F_5$-points $\{(2 : 0 : 1), (4 : 4 : 1),(4 : 1 : 1), (3 : 3 : 1), (3 : 2 : 1)\}$, which are ruled out by this test.

The extra points $R_i$ and $W$ found by the Chabauty--Coleman method but not the geometric linear Chabauty method are torsion in $J(\Z_5)/M$ but do not lie in $M$.
\end{example}

\begin{example}
\label{ex-g3-badred}
Finally, we give an example of Theorem \ref{mainthmcomp} (\ref{condition2}) where $M$ does not have good reduction, and the Chabauty--Coleman set contains extra points $R$ such that $\log (R - b)$ is not in $\log M$, only in its $p$-saturation. These extra points pass the Mordell--Weil sieve but are ruled out by the geometric linear Chabauty method.

Let $C/\Z_{(3)}$ be (the smooth projective model of) the genus $3$ curve \[ y^2 = x^7 - 3x^6 + 5x^5 - 5x^4 + 3x^3 - x^2 + 1/4
\] 
taken from a database of genus $3$ hyperelliptic curves over $\Q$ \cite{g3database} computed using the methods in \cite{SutherlandHyperelliptic}.

The points up to height $1000$ are 
\[ C(\Z_{(3)})_{\text{known}} \colonequals \{ (1 : 0 : 0), (0 : -1/2 : 1), (0 : 1/2 : 1), (1 : -1/2 : 1), (1 : 1/2 : 1)\}. \]

According to computations with the fake $2$-Selmer group done by the method \texttt{RankBounds} in Magma, the Mordell--Weil rank of $J$ is $2$.

The Chabauty--Coleman method produces the set
\begin{align*}
&C(\Z_3)_\CC =  \\
&\{ (1:0:0),  (0 : -1/2 : 1), (0 : 1/2 : 1), (1 : -1/2 : 1), (1 : 1/2 : 1), \\
&R_1 \colonequals (2 + 3^3 + 2\cdot3^4 + 2\cdot3^7  +O(3^{8}) : 1 + 3 + 2\cdot3^4 + 2\cdot3^5 + 2\cdot3^6  + O(3^8) : 1),\\ 
&R_2 \colonequals (2 + 3^3 + 2\cdot3^4 + 2\cdot3^7  +O(3^{8}) : -(1 + 3 + 2\cdot3^4 + 2\cdot3^5 + 2\cdot3^6 + O(3^8)) : 1)
 \}.
\end{align*} However, we will use geometric linear Chabauty modulo $p=3$ to show that the residue disks over the reductions $\overline{R_i}$ do not contain any rational points. We consider the residue disk over $\overline{R_1} = (2 : 1 : 1)$; the other will follow by the hyperelliptic involution. Using Magma, we can check the residue disk of the Mordell--Weil group over $\overline{R_1 - b}$ is non-empty, and we can even find an explicit $T \in M_{\overline{R_1 - b}}$, namely $ T \colonequals 73((1 : 0 : 0) - (0 : -1 : 1))$.

Similar to the previous example, as per Remark~\ref{rem-generators-for-M0}, we compute $G_1 \colonequals (0:1/2:1) - \infty$ and $G_2 \colonequals (1:1/2:1) - \infty$ generating a subgroup $H$ of the Mordell--Weil group, and verify that $H$ is saturated at the single prime $p=3$. Since $\overline{R_1}$ passes the Mordell--Weil sieve at $3$, we do not have to check $H$ is saturated at any other primes.

To compute $\Mbar_0$, we find a basis $\langle  \widetilde{G_1} \colonequals -6G_1 - 4G_2$, $\widetilde{G_2} \colonequals -10G_1 + 11 G_2 \rangle$ for the kernel of reduction modulo $3$, and compute
\begin{align*}
\log ( \widetilde{G_1})&= (2, 1, 1)\\
 \log ( \widetilde{G_2})&=(2, 1,1)
\end{align*}
in $\F_3^3$. The map $M_0/3M_0 \to J(\Z / 3^2 \Z)_0$ has a $1$-dimensional kernel generated by $\widetilde{G_1} - \widetilde{G_2}$, so $\Mbar_0$ has bad reduction.

By lifting $\overline{R_1}$ in two different ways, $Q_1$ and $Q_2$, and taking a tiny integral, we compute that the subspace $D_Q = \angle{Q_1 - Q_2} $ is spanned by the vector $\log(Q_1 - Q_2) =(2, 1, 2)$ in $\F_3^3$. Finally,  $\log(v) = \log(Q_1 - \infty - T)=  (2,2,1)$. Altogether, we have computed the determinant zero matrix
\[
 A_{(2 : 1 : 1)} =  \begin{pmatrix} 2 & 2 & 2 \\ 1& 1 & 1 \\ 1 & 1 & 2 \end{pmatrix}
\] representing the linear map $\phi: D_Q \oplus \Mbar_0 \to J(\Z/3 \Z)_0$. 
Since $\log(v)$ is not in the image of this matrix, the residue disk over $\overline{R_1}$ does not contain any rational points. Furthermore, applying the hyperelliptic involution to the calculations, we can also rule out the residue disk of $R_2$ from containing rational points.

Hence $C(\Z_3)_\GC = C(\Z_{(3)}) = C(\Z_{(3)})_{\text{known}}$ does not contain $R_1$ and $R_2$. Another way to see this, by Corollary~\ref{cor-pnmidjfp}, which is applicable as $|J(\F_3)| = 2 \cdot 53$, is to compute the following integral
\begin{align*}
\log(R_1 - \infty)  &= (2 + 2\cdot3 + 2\cdot3^2 +O(3^3), 2 + 3 + O(3^3), 1 + 2\cdot3 + 2\cdot3^2 + O(3^3));
\end{align*}
we see that reduced modulo $3$, this is not in the span of the reductions modulo $3$ of $\log ( \widetilde{G_1})$ and $\log ( \widetilde{G_2})$. We can compute that, truncated to $5$ digits of precision, we have 
\begin{align*}
&\log(R_1 - \infty) =\\
& (2\cdot3^{-1} + 2 + 3^4 + O(3^5)) \log (\widetilde{G_1}) + (3^{-1} + 1 + 3 + 3^2 + 2\cdot3^3 + 2\cdot3^4 +  O(3^5)) \log ( \widetilde{G_2}),
\end{align*}
further explaining why this point is found by the Chabauty--Coleman method but not by geometric linear Chabauty, since $R_1 - \infty$ lies in the $3$-saturation of $M$ inside $J(\Z_3)$, but not in $M$ itself.
\end{example}

\if{false}\begin{example}
Finally, we give an example where $M$ does not have good reduction, and the Chabauty--Coleman set contains extra points $R$ such that $\log (R - b)$ is not in $\log M$, only in its $p$-saturation. These extra points pass the Mordell--Weil sieve but are ruled out by the geometric linear Chabauty method.

Let $C/\Z_{(3)}$ be (the smooth projective model of) the genus $2$ curve (LMFDB label \href{https://www.lmfdb.org/Genus2Curve/Q/36460/a/145840/1}{\texttt{36460.a.145840.1}}) given by the equation \[ y^2 = x^5 - 3/4x^4 + 19/2x^3 - 67/4x^2 + 10x - 2.\]
 The Jacobian of $C$ has Mordell--Weil group $J(\Z_{(p)}) \simeq \Z$, generated by $d \colonequals (1:-1:1) - \\(1:0:0)$. We compute
\begin{align*}
\log d = (3 + 3^2 + 2\cdot 3^3 + 3^4 + 3^5 + 3^6 +  O(3^{7}),3 + 2\cdot3^2 + 3^3 + O(3^{7}))
\end{align*}
so in particular $M$ does not have good reduction.

The Chabauty--Coleman method produces the set 
\begin{align*}
&C(\Z_3)_\CC =  \\
&\{ (1:0:0), (1:-1:1),(1:1:1), \\
&(3^5 + 3^6 + 3^8 + 2\cdot3^{10} + 2\cdot3^{11} + O(3^{12}) : 1 + 3 + 2\cdot3^2 + 3^5 + 2\cdot3^6 + O(3^7) : 1),\\ 
&(3^5 + 3^6 + 3^8 + 2\cdot3^{10} + 2\cdot3^{11} + O(3^{12}) : -(1 + 3 + 2\cdot3^2 + 3^5 + 2\cdot3^6 + O(3^7)) : 1)
 \}.
\end{align*}
We call these latter two points $R_1$ and $R_2$ respectively. Then
\begin{align*}
\log(R_1 - \infty)  &= (1 + 2\cdot3^4 + O(3^5), 1 + 3 + 3^2 + 3^3 + 3^4 + O(3^5))\\
\log (R_2  - \infty) &= (2 + 2\cdot 3 + 2\cdot3^2 + 2\cdot3^3 +O(3^5), 2 + 3 + 3^2 + 3^3 + 3^4 + O(3^5)).
\end{align*}
Hence $R_1$  and $R_2$ are not in $\AJ_\infty (C(\Z_{3}))\cap M$ and the geometric linear Chabauty algorithm successfully rules out these $3$-adic points from consideration as possible rational points.
\end{example}\fi

\newcommand{\etalchar}[1]{$^{#1}$}

\end{document}